\definecolor{webred}{rgb}{0.75,0,0}
\definecolor{webgreen}{rgb}{0,0.75,0}
\definecolor{refkey}{gray}{0.75}
\numberwithin{equation}{section}
\newtheorem{theo}{Theorem}[section]
\newtheorem{lem}{Lemma}[section]
\newtheorem{Def}[theo]{Definition}
\theoremstyle{remark}
\newtheorem{rem}{Remark}[section]
\newcommand{\de}{\delta}
\newcommand{\ep}{\varepsilon}
\def\R{{\mathbb{R}}}
\def\d{\displaystyle}
\def\e{{\varepsilon}}
\def\p{\partial}
\def\al{\alpha}
\date{}
\subjclass[2010]{35L71,  35B44}
\keywords{blow-up, lifespan, nonlinear wave equations, scale-invariant damping, time-derivative nonlinearity.}
\begin{document}

\title[A blow-up result for the wave equation  with localized initial data]{A blow-up result for the wave equation  with localized initial data: the  scale-invariant damping and mass term with combined nonlinearities}
\author[M. Hamouda  and M. A. Hamza]{Makram Hamouda$^{1}$ and Mohamed Ali Hamza$^{1}$}
\address{$^{1}$ Basic Sciences Department, Deanship of Preparatory Year and Supporting Studies, P. O. Box 1982, Imam Abdulrahman Bin Faisal University, Dammam, KSA.}

\medskip

\email{mmhamouda@iau.edu.sa (M. Hamouda)} 
\email{mahamza@iau.edu.sa (M.A. Hamza)}

\pagestyle{plain}


\maketitle
\begin{abstract}
We are interested  in this article in studying the  damped wave equation with localized initial data,  in the \textit{scale-invariant case} with mass term and two combined nonlinearities. More precisely, we consider the following equation:
\begin{displaymath}
\d (E) \hspace{1cm} u_{tt}-\Delta u+\frac{\mu}{1+t}u_t+\frac{\nu^2}{(1+t)^2}u=|u_t|^p+|u|^q,
\quad \mbox{in}\ \R^N\times[0,\infty),
\end{displaymath}
with small initial data. Under some assumptions on the mass and damping coefficients, $\nu$ and $\mu>0$, respectively, we show that blow-up region and the lifespan bound of the solution of $(E)$ remain the same as  the ones obtained in \cite{Our2} in the case of a mass-free wave equation, {\it i.e.} $(E)$ with $\nu=0$. 
 Furthermore, using in part the computations done for $(E)$, we enhance the result in \cite{Palmieri}  on the Glassey conjecture for the solution of $(E)$ with omitting the nonlinear term $|u|^q$. Indeed,  the blow-up region is extended from $p \in (1, p_G(N+\sigma)]$, where $\sigma$ is given by \eqref{sigma} below, to $p \in (1, p_G(N+\mu)]$ yielding, hence, a better estimate of the lifespan when $(\mu-1)^2-4\nu^2<1$. Otherwise, the two results coincide. Finally, we may conclude that the mass term {\it has no influence} on the dynamics of $(E)$ (resp.  $(E)$ without the nonlinear term $|u|^q$), and the conjecture we made in \cite{Our2} on the threshold between the blow-up and the global existence regions obtained    holds true here.
\end{abstract}


\section{Introduction}
\par\quad

We consider the following  family of semilinear damped wave equations
\begin{equation}
\label{G-sys}
\left\{
\begin{array}{l}
\d u_{tt}-\Delta u+\frac{\mu}{1+t}u_t+\frac{\nu^2}{(1+t)^2}u=a|u_t|^p+b|u|^q,
\quad \mbox{in}\ \R^N\times[0,\infty),\\
u(x,0)=\e f(x),\ u_t(x,0)=\e g(x), \quad  x\in\R^N,
\end{array}
\right.
\end{equation}
where $a$ and $b$ are nonnegative constants and $\mu, \nu \ge 0$. The parameter $\e$ is a positive number which is characterizing the smallness of the initial
data,  and $f$ and $g$ are two compactly supported non-negative functions   on  $B_{\R^N}(0,R), R>0$.

We assume along this article  that $p, q>1$ and $q \le \frac{2N}{N-2}$ if $N \ge 3$.

The  linear equation  associated with \eqref{G-sys} reads as follows:
\begin{equation}\label{1.2}
\d u^L_{tt}-\Delta u^L+\frac{\mu}{1+t}u^L_t+\frac{\nu^2}{(1+t)^2}u^L=0.
\end{equation} 
It is clear that the equation \eqref{1.2} is invariant under the  transform
$$\tilde{u}^L(x,t)=u^L(\Omega x, \Omega(1+t)-1), \ \Omega>0.$$
This  explains  somehow the name of \textit{scale-invariant} case for \eqref{G-sys}. Obviously, one can apply two types of transformation to \eqref{1.2} leading to whether a purely damped wave equation or a wave equation with mass term. For the analysis of these cases, we introduce the parameter $\de$ defined as
\begin{equation}\label{delta}
\de=(\mu-1)^2-4\nu^2.
\end{equation}
It is worth mentioning to recall that the scale-invariant damping is the critical case between the class of parabolic equations (for $\de$ large enough) and the one of hyperbolic equations (when $\de$ is small). Note that the parameter $\de$ has an important role in the dynamics of the solution of \eqref{1.2} and consequently \eqref{G-sys}, see e.g. \cite{NWN,Palmieri-phd}.\\
Indeed, for $\de \ge 0$, by setting
\begin{equation}\label{transf-v}
u^L(x,t)=(1+t)^{-\alpha}v^L(x,t),
\end{equation}
where 
\begin{equation}\label{alpha}
\al = \frac{\mu -1 -\sqrt{\de}}{2} \ \text{which verifies} \ \al^2-(\mu -1) \al+\nu^2=0,
\end{equation}
then the obtained equation for $v^L$ is a damped wave equation (without mass term), which reads as follows:
\begin{equation}\label{1.2-v}
\d v^L_{tt}-\Delta v^L+\frac{1+\sqrt{\de}}{1+t}v^L_t=0.
\end{equation}
 However, for $\de < 0$, the situation is different and we introduce the Liouville transform:
$$\d w^L(x,t)=(1+t)^{\frac{\mu}{2}}u^L(x,t),$$
with $w^L$ satisfies a free-damped wave equation with mass term
\begin{equation}\label{1.2-w}
\d w^L_{tt}-\Delta w^L+\frac{1-\de}{4(1+t)^2}w^L=0.
\end{equation}

\subsection{The non perturbed case}

Let $\mu =\nu = 0$ throughout this subsection.  Then, by taking
$(a,b)=(0,1)$ in \eqref{G-sys},  the equation \eqref{G-sys} reduces to  the classical semilinear wave equation in relationship with the Strauss conjecture for which we recall the critical power  $q_S$  which is solution of 
\begin{equation}
(N-1)q^2-(N+1)q-2=0,
\end{equation}
and which is given  by
\begin{equation}
q_S=q_S(N):=\frac{N+1+\sqrt{N^2+10N-7}}{2(N-1)}.
\end{equation}
For $q \le q_S$ and  under suitable sign assumptions for the initial data,  there is no global solution for  \eqref{G-sys}, and for $q > q_S$ the existence of a global solution is ensured for small initial data; see e.g. \cite{John2,Strauss,YZ06,Zhou} among other references. \\

Now,  in the case  $(a,b)=(1,0)$, the Glassey conjecture yields the critical power $p_G$ which is given by
\begin{equation}\label{Glassey}
p_G=p_G(N):=1+\frac{2}{N-1}.
\end{equation}
The critical power, $p_G$, separates the two regions for the power $p$ characterized by the global existence (for $p>p_G$) and the nonexistence (for $p \le p_G$) of a global  solution under the smallness of the initial data; see e.g. \cite{Hidano1,Hidano2,John1,Rammaha,Sideris,Tzvetkov,Zhou1}.\\

Here, we are interested in  the case   $a, b \neq 0$, thus, we may assume that $(a,b)=(1,1)$. For this case and when   the powers $p$ and $q$ satisfy   $p \le p_G$ or $q \le q_S$, the blow-up of the solution of  \eqref{G-sys} can be similarly obtained. However, for $p > p_G$ and $q > q_S$, there is a new blow-up border which is characterized by
\begin{equation}\label{1.5}
\lambda(p, q, N):=(q-1)\left((N-1)p-2\right) < 4.
\end{equation}
The reader may consult \cite{Dai,Han-Zhou,Hidano3,Wang} for more details.\\
It is proven in \cite{Hidano3} that,  for $p > p_G$ and $q > q_S$,  \eqref{1.5} implies the global existence of the solution of \eqref{G-sys} (with $\mu =\nu = 0$ and $(a,b)=(1,1)$). This is specific to the case of mixed  nonlinearities. Therefore, it is interesting to see if this phenomenon still occurs for the damping case $\mu > 0$. This will be exposed in the next subsection.

\subsection{The scale-invariant damped   case}
We consider here $\mu > 0$, $\nu=0$ and $(a,b)=(0,1)$. Hence, for $\mu$  large enough,  the equation \eqref{G-sys} is of a parabolic type, namely it behaves like a heat-type equation; see e.g. \cite{dabbicco1,dabbicco2,wakasugi}. However, for small  $\mu$,  the solution of \eqref{G-sys} is  like a wave. In fact, the damping has a shifting effect by $\mu>0$ on the critical power $q_S$, and more precisely we have the blow-up for
$$0<\mu < \frac{N^2+N+2}{N+2} \quad \text{and} \quad 1<q\le q_S(N+\mu);$$
see e.g. \cite{Palmieri-Reissig, Palmieri, Palmieri-Tu, Tu-Lin1, Tu-Lin}, and \cite{Dab1,Dab2} for the case 
 $\mu=2$ and $N=2,3$. The global existence for $\mu=2$ is proven in \cite{Dab1, Dab2, Palmieri2}.

Now, for  $(a,b)=(1,0)$, we first mention the blow-up result for the solution of \eqref{G-sys} (with $(a,b)=(1,0)$) obtained by Lai and Takamura in \cite{LT2} where  
a first estimate of the lifespan upper bound  is given. Then,  Palmieri and Tu  improved this result in \cite{Palmieri} by extending the blow-up region for $p$ in the system  \eqref{G-sys} with $(a,b)=(1,0)$, one time-derivative nonlinearity ({\it i.e.} \eqref{T-sys-bis} below) and a mass term. More precisely, they obtain a blow-up result for $p \in (1, p_G(N+\sigma(\mu,0))]$ where 
\begin{equation}\label{sigma}
\sigma=\sigma(\mu,\nu):=\left\{
\begin{array}{lll}
\mu+1 -\sqrt{\de}& \textnormal{if} &\de \in [0,1),\\
\mu & \text{if} &\de \ge 1,
\end{array}
\right.
\end{equation}
and $\de$ is given by \eqref{delta}.  Nevertheless, the result in \cite{Palmieri} was recently refined in \cite{Our2} by extending the upper bound for $p$ from $p_G(N+\sigma(\mu,0)$ to $p_G(N+\mu)$.  Obviously, the result in \cite{Our2} improves the one in \cite{Palmieri} only for $\mu \in (0,2)$.

Finally, in the presence of two mixed nonlinearities, {\it i.e.} $(a,b)=(1,1)$, it is proved in \cite{Our,Our2} that the blow-up region  of the solution of  \eqref{G-sys}, in this case, is in fact a shift by $\mu$ of the one related to the same problem but without damping. Obviously,  for $p \le p_G(N+\mu)$ and $q \le q_S(N+\mu)$, the  blow-up of the solution of \eqref{G-sys} can be easily obtained.   Furthermore, for $p > p_G(N+\mu)$, $q > q_S(N+\mu)$  and the combination of a weak damping term and  two mixed nonlinearities, the blow-up bound becomes $\lambda(p, q, N+\mu)<4$ instead of \eqref{1.5} which characterizes the free-damping case.

\subsection{The scale-invariant damping and mass   case}

Along this part, we assume that $\mu > 0$ and $\nu>0$. Therefore, let us start with the case $(a,b)=(0,1)$. It is known in the literature  that  the mass and the scale-invariant damping terms are in competition generating thus several cases depending on the values of $\mu$ and $\nu$, see e.g. \cite{Palmieri-phd}. More precisely, as mentioned for the linear equation \eqref{1.2}, one can recall that the mass term steps in the dynamics for $\de \ge 0$. Indeed, for $\de \ge (N+1)^2$, which corresponds to the large values of $\de$ and consequently the large values of  the damping term $\mu$, it is proven that the critical exponent is the shifted Fujita exponent $q_F(N+\frac{\mu-1-\sqrt{\de}}{2})$ where $q_F(N)=1+\frac{2}{N}$, see \cite{NPR2017,Palmieri-2018,Palmieri-2019-2,PR-2018}. However, for $\de \in [0,1)$ (corresponding to the small values of $\de$), the authors in \cite{Palmieri-Reissig} show  the appearance of a competition between the Fujita and the Strauss exponents. Indeed, they obtained a blow-up result for $q \le \max(q_F(N+\frac{\mu-1-\sqrt{\de}}{2}),q_S(N+\mu))$. We note that a recent improvement and a better comprehension of the transition from the heat-like equation to the wave-like one are obtained in \cite{LST-2020}. On the other hand, a blow-up result is proven in \cite{Palmieri-Tu} for all $\de \ge 0$ and $q \le q_S(N+\mu)$. Nevertheless, for  $\de < 0$, the situation is much different leading to the well-known Klein-Gordon equation where the mass term is more influent, and to the best of our knowledge the dynamics are less understood in the literature; see e.g. \cite{NWN}.

In this article, we consider the following Cauchy problem in 
the scale-invariant case and combined nonlinearities:
\begin{equation}
\label{T-sys}
\left\{
\begin{array}{l}
\d u_{tt}-\Delta u+\frac{\mu}{1+t}u_t+\frac{\nu^2}{(1+t)^2}u=|u_t|^p+|u|^q, 
\quad \mbox{in}\ \R^N\times[0,\infty),\\
u(x,0)=\e f(x),\ u_t(x,0)=\e g(x), \quad  x\in\R^N,
\end{array}
\right.
\end{equation}
where $\mu, \nu^2 > 0$, $\ N \ge 1$, $\e>0$ is a sufficiently  small parameter,
and  $f,g$ are compactly supported non-negative functions on which some assumptions will be specified later on.

One of the objectives in the present work is to study of the Cauchy problem (\ref{T-sys}) for $\mu, \nu^2>0$ and the influence of the parameters $\mu$ and $\nu$ on the blow-up result and the lifespan estimate.  Indeed, thanks to the transform \eqref{transf-v}, for $\de \ge 0$, and a better comprehension of the linear problem corresponding to (\ref{T-sys}), we surprisingly show that {\it there is no influence of the mass term} in the blow-up dynamics of the solution of  (\ref{T-sys}).

Moreover, under the same hypotheses on the data as for (\ref{T-sys}), we are interested now in studying the following equation which is characterized by the presence of a one  nonlinearity of time-derivative type, namely 
\begin{equation}
\label{T-sys-bis}
\left\{
\begin{array}{l}
\d u_{tt}-\Delta u+\frac{\mu}{1+t}u_t+\frac{\nu^2}{(1+t)^2}u=|u_t|^p, 
\quad \mbox{in}\ \R^N\times[0,\infty),\\
u(x,0)=\e f(x),\ u_t(x,0)=\e g(x), \quad  x\in\R^N.
\end{array}
\right.
\end{equation}
Using the computations obtained for (\ref{T-sys}), we will enhance the blow-up interval, $p \in (1, p_G(N+\sigma)]$ ($\sigma$ is given by \eqref{sigma}),  proven in \cite{Palmieri},  to  arrive at the interval $p \in (1, p_G(N+\mu)]$, for $\de \in (0,1)$. Nevertheless, for $\de \ge 1$, our result for (\ref{T-sys-bis}) coincides  with the one in \cite{Palmieri}. Inspired from \cite{Our2}, we may conjecture here again that the obtained upper bound exponent is the critical one in the sense that it separates the blow-up and the global existence regions. Notice that our method is different form the one  in \cite{Palmieri} where the use of an integral representation of the solution is employed. However, in the present work, we make use of the multiplier technique together with the fact  that $G_2(t)$ is  coercive  starting from relatively large time thanks to the presence of 
the nonlinearity $|u_t|^p$ which controls in part the negativity of $G_2(t)$.
\\

The  rest of the article is organized as follows. First, Section \ref{sec-main} is devoted to the  definition of  the weak formulation of (\ref{T-sys}), in the energy space, together with the statement of the main theorems of our work.  Then, we prove in Section \ref{aux} some technical lemmas. These auxiliary results, among other tools, are used to conclude  the proof of the main results in Sections \ref{proof} and \ref{sec-ut}.  Indeed, in Section \ref{proof} (resp. Sec. \ref{sec-ut}), we prove the blow-up of the solution of (\ref{T-sys}) (resp. (\ref{T-sys-bis})) for $p$ and $q$ satisfying $\lambda(p, q, N+\mu)<4$ (resp. for $p$ verifying $p \in (1, p_G(N+\mu)]$).

\section{Main Results}\label{sec-main}
\par

In this section, we will   state the main results in this work. To this end, we first give a sense to  the  solution of (\ref{T-sys}) in the corresponding energy space. Hence, the weak formulation of  (\ref{T-sys}) reads as:
\begin{Def}\label{def1}
 We call $u$ is a weak  solution of
 (\ref{T-sys}) on $[0,T)$
if
\begin{displaymath}
\left\{
\begin{array}{l}
u\in \mathcal{C}([0,T),H^1(\R^N))\cap \mathcal{C}^1([0,T),L^2(\R^N)), \vspace{.1cm}\\
 u \in L^q_{loc}((0,T)\times \R^N) \ \text{and} \ u_t \in L^p_{loc}((0,T)\times \R^N),
 \end{array}
  \right.
\end{displaymath}
verifies, for all $\Phi\in \mathcal{C}_0^{\infty}(\R^N\times[0,T))$ and all $t\in[0,T)$, the following identity:
\begin{equation}
\label{energysol2}
\begin{array}{l}
\d\int_{\R^N}u_t(x,t)\Phi(x,t)dx-\int_{\R^N}u_t(x,0)\Phi(x,0)dx  -\int_0^t  \int_{\R^N}u_t(x,s)\Phi_t(x,s)dx \,ds\vspace{.2cm}\\
\d+\int_0^t  \int_{\R^N}\nabla u(x,s)\cdot\nabla\Phi(x,s) dx \,ds+\int_0^t  \int_{\R^N}\frac{\mu}{1+s}u_t(x,s) \Phi(x,s)dx \,ds\vspace{.2cm}\\
\d  +\int_0^t  \int_{\R^N}\frac{\nu^2}{(1+s)^2}u(x,s) \Phi(x,s)dx \,ds=\int_0^t \int_{\R^N}\left\{|u_t(x,s)|^p+|u(x,s)|^q\right\}\Phi(x,s)dx \,ds.
\end{array}
\end{equation}
\end{Def}

Of course the weak formulation corresponding to (\ref{T-sys-bis}) can be also obtained by \eqref{energysol2} without the nonlinear term $|u|^q$ and with the necessary modifications.

Hence, with the help of  the multiplier $m(t)$ defined by
\begin{equation}
\label{test1}
m(t):=(1+t)^{\mu},
\end{equation}
we can rewrite Definition \ref{def1}, by considering $m(t)\Phi(x,t)$ as a test function, in  the following equivalent formulation.
\begin{Def}\label{def2}
 We say that $u$ is a weak  solution of
 (\ref{T-sys}) on $[0,T)$
if
\begin{displaymath}
\left\{
\begin{array}{l}
u\in \mathcal{C}([0,T),H^1(\R^N))\cap \mathcal{C}^1([0,T),L^2(\R^N)), \vspace{.1cm}\\
 u \in L^q_{loc}((0,T)\times \R^N) \ \text{and} \ u_t \in L^p_{loc}((0,T)\times \R^N),
 \end{array}
  \right.
\end{displaymath}
satisfies, for all $\Phi\in \mathcal{C}_0^{\infty}(\R^N\times[0,T))$ and all $t\in[0,T)$, the following equation:
\begin{equation}
\label{energysol}
\begin{array}{l}
m(t)\d\int_{\R^N}u_t(x,t)\Phi(x,t)dx-\int_{\R^N}u_t(x,0)\Phi(x,0)dx \vspace{.2cm}\\
\d -\int_0^t m(s) \int_{\R^N}u_t(x,s)\Phi_t(x,s)dx \,ds+\int_0^t m(s) \int_{\R^N}\nabla u(x,s)\cdot\nabla\Phi(x,s) dx \,ds\vspace{.2cm}\\
\d+\int_0^t  \int_{\R^N}\frac{\nu^2m(s)}{(1+s)^2}u(x,s) \Phi(x,s)dx \,ds\vspace{.2cm}\\
\d=\int_0^t m(s) \int_{\R^N}\left\{|u_t(x,s)|^p+|u(x,s)|^q\right\}\Phi(x,s)dx \,ds.
\end{array}
\end{equation}
\end{Def}

In the following, we will state the main results in this article.
\begin{theo}
\label{blowup}
Let $p, q>1$, $\nu^2, \mu \ge 0$  and $\de \ge 0$ such that 
\begin{equation}\label{assump}
\lambda(p, q, N+\mu)<4,
\end{equation}
where  $\lambda$ is given by \eqref{1.5}, and $p>p_G(N+\mu)$, $q>q_S(N+\mu)$. Furthermore, assume that  $f\in H^1(\R^N)$ and $g\in L^2(\R^N)$ are non-negative functions which are compactly supported on  $B_{\R^N}(0,R)$,
  do not vanish everywhere and satisfy
  \begin{equation}\label{hypfg}
  \frac{\mu-1-\sqrt{\de}}{2}f(x)+g(x) > 0.
  \end{equation}
Let $u$ be an energy solution of \eqref{T-sys} on $[0,T_\e)$ such that $\mbox{\rm supp}(u)\ \subset\{(x,t)\in\R^N\times[0,\infty): |x|\le t+R\}$. 
Then, there exists a constant $\e_0=\e_0(f,g,N,R,p,q,\mu,\nu)>0$
such that $T_\e$ verifies
\[
T_\e\leq
 C \,\e^{-\frac{2p(q-1)}{4-\lambda(p, q, N+\mu)}},
\]
 where $C$ is a positive constant independent of $\e$ and $0<\e\le\e_0$.
\end{theo}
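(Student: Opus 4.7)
My strategy is to mimic the argument from \cite{Our2} for the mass-free scale-invariant damped case and to use the transform \eqref{transf-v} to show that the mass term contributes no extra obstruction once $\de \ge 0$. Setting $v(x,t) := (1+t)^{\al} u(x,t)$ with $\al$ as in \eqref{alpha}, the equation for $v$ carries the lower damping coefficient $\mu_1 := 1 + \sqrt{\de}$ and no mass, but with modified nonlinear right-hand side. The algebraic identity $\mu = \mu_1 + 2\al$ guarantees that, once one multiplies the weak formulation \eqref{energysol} by the factor $m(t) = (1+t)^\mu$ from \eqref{test1}, the effective shift in the dimension driving the blow-up stays $\mu$ (and not $\mu_1$ nor $\sigma$). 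This is the mechanism behind the ``no mass influence'' principle asserted in the introduction.

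Concretely, I would combine two functionals. The first is
\begin{equation*}
F_1(t) := \int_{\R^N} u(x,t)\,dx,
\end{equation*}
obtained by choosing $\Phi \equiv 1$ on the light cone $\{|x|\le t+R\}$ in \eqref{energysol}. After an integration in time against the multiplier $m(t)$, this produces
\begin{equation*}
m(t)\bigl(F_1'(t) + \tfrac{\al}{1+t} F_1(t)\bigr) = \e\, C_0(f,g) + \int_0^t m(s)\int_{\R^N}\bigl(|u_t|^p + |u|^q\bigr)\,dx\,ds,
\end{equation*}
where $C_0(f,g)>0$ precisely thanks to \eqref{hypfg}. Applying H\"older on the propagation cone of volume $\sim(t+R)^N$ converts the $|u|^q$ piece into a lower bound of the form $C(1+t)^{\mu-N(q-1)}F_1(t)^q$ on the right-hand side. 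The second functional uses the Yordanov--Zhang eigenfunction $\varphi_1(x)$ with $\Delta\varphi_1 = \varphi_1$ and a carefully tuned time weight $\lambda(t)$ adapted to the linear scale-invariant damped operator,
\begin{equation*}
F_2(t) := \int_{\R^N} u(x,t)\,\varphi_1(x)\lambda(t)\,dx,
\end{equation*}
and a parallel ODI feeds the $|u_t|^p$ nonlinearity into the analysis, yielding $F_2(t) \ge C\e(1+t)^{-\kappa}$ for explicit $\kappa$. Running a Kato/John-type iteration on $F_1$, with the iteration seeded by the lower bound extracted from $F_2$, produces an exploding sequence of lower bounds whose consistency condition is exactly $\lambda(p,q,N+\mu)<4$; optimizing the iteration index returns the lifespan $T_\e \lesssim \e^{-2p(q-1)/(4-\lambda(p,q,N+\mu))}$.

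The main obstacle is to ensure that the iteration shifts the effective dimension genuinely by $\mu$ rather than by the a priori natural parameter $\sigma$ of \eqref{sigma}. This is where the transform \eqref{transf-v} is indispensable: it replaces damping $\mu$ by $\mu_1$, but simultaneously injects $(1+t)^\al$-weights into the nonlinearities, and the bookkeeping must be arranged so that, after reincorporating the multiplier $m(t)=(1+t)^\mu$ in the weak form, one recovers the cleaner shift by $\mu$. A secondary difficulty is that both nonlinearities must be exploited productively in the iteration, by alternating between $F_1$ (which absorbs $|u|^q$) and $F_2$ (which absorbs $|u_t|^p$), so that the combined growth produces precisely the exponent $\lambda(p,q,N+\mu)$ in the consistency inequality. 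Once both functionals are coercively bounded below, the remaining John--Kato machinery is routine, but this exponent accounting distinguishing $\mu$ from $\sigma$ is where the genuine novelty lies.
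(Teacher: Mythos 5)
Your high-level strategy is the right one and matches the paper's in outline: the mass term is absorbed by the weight $(1+t)^{\al}$ from \eqref{transf-v}--\eqref{alpha} (the paper applies it to the averaged functional, setting $G(t)=(1+t)^{\al}\int u\,dx$, which then solves a mass-free equation with damping coefficient $1+\sqrt{\de}$), the $|u|^q$ term is exploited through the spatial average on the light cone, the iteration is seeded by a lower bound coming from $|u_t|^p$, and a Kato-type argument closes the proof with the stated lifespan. However, there are two genuine gaps. The decisive one concerns your second functional: to convert the $|u_t|^p$ nonlinearity into a seed lower bound via H\"older, you need coercivity of a functional of $u_t$, namely $G_2(t)=\int_{\R^N}u_t(x,t)\psi(x,t)\,dx$ as in \eqref{F2def}, with $\psi=\rho(t)\phi(x)$ and $\rho$ built from the modified Bessel function $K_{\sqrt{\de}/2}$ in \eqref{lmabdaK}. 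Your $F_2=\int u\,\varphi_1\lambda(t)\,dx$ tests $u$, not $u_t$; it is the analogue of the paper's $G_1$ and cannot produce the bound $\int|u_t|^p\,dx\gtrsim\e^p(1+t)^{-\frac{\mu p+(N-1)(p-2)}{2}}$. Worse, proving that $\int u_t\psi\,dx$ is eventually coercive is precisely the hard part here: unlike the mass-free case of \cite{Our2}, the mass term can drive $G_2$ negative, and the paper must first establish a negative lower bound \eqref{G2+bis7} (Lemma \ref{F1-2}), using the $|u_t|^p$ nonlinearity itself to absorb part of the mass contribution, before obtaining $G_2(t)\ge C\e$ only for $t\ge T_1=-\ln\e$ (Lemma \ref{F11}). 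Your sketch assumes this coercivity comes from a ``parallel ODI'' without confronting the sign problem, which is where the proof would actually fail.

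The second gap is computational but not cosmetic. The identity you write,
\begin{equation*}
m(t)\Bigl(F_1'(t)+\tfrac{\al}{1+t}F_1(t)\Bigr)=\e\,C_0(f,g)+\int_0^t m(s)\int_{\R^N}\bigl(|u_t|^p+|u|^q\bigr)\,dx\,ds,
\end{equation*}
is false for $\nu>0$: differentiating the left-hand side gives $m(t)\bigl(F_1''+\tfrac{\mu+\al}{1+t}F_1'+\tfrac{\al(\mu-1)}{(1+t)^2}F_1\bigr)$, which matches $m(t)\bigl(F_1''+\tfrac{\mu}{1+t}F_1'+\tfrac{\nu^2}{(1+t)^2}F_1\bigr)$ only when $\al=0$, since $\al(\mu-1)=\al^2+\nu^2$ by \eqref{alpha}. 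The correct multiplier for the transformed functional $G=(1+t)^{\al}F$ is $\mathcal{M}(t)=(1+t)^{1+\sqrt{\de}}$, not $(1+t)^{\mu}$. This matters beyond bookkeeping: after setting $L=\sqrt{\mathcal{M}}\,G$ a residual Klein--Gordon potential $\tfrac{1-\de}{4(1+t)^2}L$ survives in the differential inequality \eqref{F'0ineq2}, which forces the case distinction $\de\ge1$ versus $\de<1$ in the final Kato argument --- a step absent from your outline.
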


\begin{theo}
\label{th_u_t}
Let $\nu^2, \mu \ge 0$  and $\de \ge 0$. Assume that  $f\in H^1(\R^N)$ and $g\in L^2(\R^N)$ are non-negative and compactly supported functions   on  $B_{\R^N}(0,R)$ which
  do not vanish everywhere and verify \eqref{hypfg}. Let $u$ be an energy solution of \eqref{T-sys-bis} on $[0,T_\e)$ such that $\mbox{\rm supp}(u)\ \subset\{(x,t)\in\R^N\times[0,\infty): |x|\le t+R\}$. 
Then, there exists a constant $\e_0=\e_0(f,g,N,R,p,\mu,\nu)>0$
such that $T_\e$ verifies
\begin{displaymath}
T_\e \leq
\d \left\{
\begin{array}{ll}
 C \, \e^{-\frac{2(p-1)}{2-(N+\mu-1)(p-1)}}
 &
 \ \text{for} \
 1<p<p_G(N+\mu), \vspace{.1cm}
 \\
 \exp\left(C\e^{-(p-1)}\right)
&
 \ \text{for} \ p=p_G(N+\mu),
\end{array}
\right.
\end{displaymath}
 where $C$ is a positive constant independent of $\e$ and $0<\e\le\e_0$.
\end{theo}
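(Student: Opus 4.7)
My plan is to adapt the multiplier/iteration scheme used for Theorem~\ref{blowup} to the case without the $|u|^q$ term, exploiting the fact that, for $\de\ge0$, the linear part of \eqref{T-sys-bis} is conjugated to the purely damped wave equation \eqref{1.2-v} by the substitution \eqref{transf-v}. First I would test the weak formulation \eqref{energysol} (without the $|u|^q$ contribution) against $\Phi\equiv1$ on the support of $u$, which is contained in $\{|x|\le t+R\}$; this kills the gradient term. With
\[
F(t):=\int_{\R^N}u(x,t)\,dx,\qquad N(t):=\int_{\R^N}|u_t(x,t)|^p\,dx,
\]
and the multiplier $m(t)=(1+t)^\mu$ from \eqref{test1}, the resulting ODE-type identity reads
\[
\frac{d}{dt}\bigl[(1+t)^{\mu}F'(t)\bigr]+\nu^{2}(1+t)^{\mu-2}F(t)=(1+t)^{\mu}N(t).
\]

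I would then perform the change of variable $V(t):=(1+t)^{\al}F(t)$ with $\al$ as in \eqref{alpha}. Because $\al^{2}-(\mu-1)\al+\nu^{2}=0$, the mass-like contribution cancels and a direct computation yields
\[
\bigl[(1+t)^{1+\sqrt{\de}}V'(t)\bigr]' \;=\; (1+t)^{1+\sqrt{\de}+\al}N(t)\;\ge\;0.
\]
The hypothesis \eqref{hypfg} is exactly $\al f+g>0$, whence $V'(0)=\e\int(\al f+g)\,dx>0$; integrating twice therefore forces $V'(t)>0$, then $V(t)\ge c\,\e$ for all $t\ge t_{0}$, with $c,t_{0}>0$ independent of $\e$, so that finally
\[
F(t)\ge c\,\e\,(1+t)^{-\al}\qquad\text{for }t\ge t_{0}.
\]

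The nonlinear enhancement now comes from Jensen's inequality on the support ball: $N(t)\ge C(1+t)^{-N(p-1)}|F'(t)|^{p}$. Inserting this into
\[
(1+t)^{1+\sqrt{\de}}V'(t)=V'(0)+\int_{0}^{t}(1+s)^{1+\sqrt{\de}+\al}N(s)\,ds
\]
produces a self-improving inequality for $V'$, and hence for $F$ via $F=(1+t)^{-\al}V$. A bootstrap iteration feeds back polynomial lower bounds $F(t)\ge A_{n}\,\e^{p_{n}}(1+t)^{a_{n}}$ into Jensen and then into the integral identity; the resulting recursions $p_{n+1}=p\,p_{n}+1$ and $a_{n+1}=p\,a_{n}+\text{const}$ saturate, and the limiting effective exponent $a_{\infty}/p_{\infty}$ is positive exactly when $(N+\mu-1)(p-1)<2$, i.e.\ $p<p_{G}(N+\mu)$. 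After the cancellation between $\al$ and $1+\sqrt{\de}$, the critical exponent only involves $N+\mu$, which is the source of the improvement over the exponent $p_{G}(N+\sigma)$ in \cite{Palmieri}.

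The lifespan estimates then follow by plugging the saturated lower bound back into the identity for $V'$ and comparing with a Bernoulli-type ODE: separation of variables gives the polynomial bound $T_{\e}\le C\,\e^{-2(p-1)/[2-(N+\mu-1)(p-1)]}$ when $p<p_{G}(N+\mu)$, while at $p=p_{G}(N+\mu)$ the power integral turns into a logarithm at each iteration step, and summing over the iterations yields $T_{\e}\le\exp(C\,\e^{-(p-1)})$. The main obstacle, and the reason for the author's description of $G_{2}(t)=(1+t)^{\mu}F'(t)$ as coercive only from relatively large time, is that Jensen requires a lower bound on $|F'|$ while we initially control only $V'$; since $F'=(1+t)^{-\al}V'-\al(1+t)^{-1}F$ and $\al$ can be of either sign, this relation does not \emph{a priori} yield $F'\ge0$. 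The key observation is that the term $-\nu^{2}(1+s)^{\mu-2}F(s)$ responsible for $G_{2}$ being possibly negative at small $t$ is dominated, for $t\ge t_{0}$, by the nonlinear integral $\int_{0}^{t}(1+s)^{\mu}N(s)\,ds$ as soon as the iteration begins producing growth in $N$; this forces $F'$ eventually positive and closes the bootstrap.
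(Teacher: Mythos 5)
There is a genuine gap, and it lies at the heart of why the paper can reach the Glassey exponent $p_G(N+\mu)$. Your scheme tests against $\Phi\equiv 1$ and then applies Jensen/H\"older over the whole ball $\{|x|\le t+R\}$, which produces the volume factor $(1+t)^{-N(p-1)}$ in front of $|F'(t)|^p$. This factor is too lossy: the resulting Riccati-type inequality $y'\gtrsim y^p(1+t)^{-N(p-1)}$ only forces blow-up for $N(p-1)\le 1$, i.e.\ roughly $p\le 1+1/N$, far below $p_G(N+\mu)=1+2/(N+\mu-1)$. The iteration you invoke (recursions $p_{n+1}=pp_n+1$, $a_{n+1}=pa_n+\mathrm{const}$) is the Kato--John bootstrap adapted to the nonlinearity $|u|^q$, where the functional $F$ itself grows and feeds back; for the derivative nonlinearity the iteration does not enlarge the admissible range of $p$ beyond the first step, and your claim that the limiting exponent condition is $(N+\mu-1)(p-1)<2$ is asserted rather than derived. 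The paper instead works with $G_2(t)=\int u_t(x,t)\psi(x,t)\,dx$ where $\psi=\rho(t)\phi(x)$ is the Yordanov--Zhang-type test function concentrated near the light cone; estimate \eqref{psi} together with $\rho(t)e^t\le C(1+t)^{\mu/2}$ replaces the volume factor by $(1+t)^{-(N+\mu-1)(p-1)/2}$ (see \eqref{inequalityfornonlinearin}), which is exactly what makes $p_G(N+\mu)$ attainable. No choice of flat averaging recovers this.

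The second problem is the sign issue you correctly identify but resolve circularly. You need a lower bound on $|F'|$ (equivalently, positivity of $G_2$) to start the nonlinear feedback, and you propose that the obstructing mass term is ``dominated by the nonlinear integral as soon as the iteration begins producing growth in $N$'' --- but the iteration cannot begin until the sign is settled. The paper breaks this circle without any a priori information on the nonlinearity: in Lemma \ref{F1-2} the troublesome term $\nu^2 e^{t}|F_2(t)|$ is absorbed \emph{pointwise in time} by Young's inequality into $\int|u_t|^p\psi_0\,dx$ plus an explicit, solution-independent error (see \eqref{f2-less}), yielding the unconditional lower bound \eqref{G2+bis7}; then Lemma \ref{F11} runs a linear ODE comparison to conclude $G_2(t)\ge C_{G_2}\e$ for all $t\ge T_1=-\ln\e$. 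Only after that does the proof of Theorem \ref{th_u_t} set up the single Riccati inequality $H'(t)\ge CH^p(t)(1+t)^{-(N+\mu-1)(p-1)/2}$ with $H(T_3(\e))\sim\e$, from which both lifespan bounds follow by separation of variables (the critical case giving the exponential bound directly, not by ``summing over iterations''). To repair your argument you would need both the weighted functional and a non-circular absorption of the mass term of the kind just described.
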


\begin{rem}
We note that the result in Theorem \ref{blowup} does not depend on the parameter $\nu$. Hence, thanks to \cite[Remark 2.3]{Our2}, we have  the existence of a pair $(p_0(N+\mu),q_0(N+\mu))$ which satisfies  \eqref{assump}, $p_0(N+\mu) > p_G(N+\mu)$ and $q_0(N+\mu) > q_S(N+\mu)$. Consequently, the hypothesis on $p$ and $q$ in Theorem \ref{blowup} makes sense.
\end{rem}

\begin{rem}
It is clear  that the limiting value $p_G(N+\mu)$ is less or equal to the critical exponent for $p$ in Theorem \ref{th_u_t} and the blow-up result there does not depend on the parameter $\nu$. Hence, we believe, as observed in \cite[Remark 2.1]{Our2}, that this limiting value is the critical one. The rigorous proof of this assertion (which is related to the global existence) will be the subject of a forthcoming work.
\end{rem}

\begin{rem}
We note that for $q \le q_S(N+\mu)$ and $p \le p_G(N+\mu)$ ($\de \ge 0$) a blow-up result for \eqref{T-sys} is proven in \cite{Palmieri-Tu} and \cite{Our2}, respectively. Moreover, as explained before, the presence of two mixed nonlinearities in \eqref{T-sys} generates  a new  region in both cases  $\mu=0$ and  $\mu>0$; see \cite{Hidano3} and \cite{Our2}, respectively. Hence, we concentrate our effort in the present work to look for the blow-up in the region $q > q_S(N+\mu)$ and $p > p_G(N+\mu)$; this justifies the hypotheses on $p$ and $q$ in Theorem \ref{blowup}.
\end{rem}

\section{Some auxiliary results}\label{aux}
\par

First, we introduce the  positive test function  $\psi(x,t)$ which is defined by
\begin{equation}
\label{test11}
\psi(x,t):=\rho(t)\phi(x);
\quad
\phi(x):=
\left\{
\begin{array}{ll}
\d\int_{S^{N-1}}e^{x\cdot\omega}d\omega & \mbox{for}\ N\ge2,\vspace{.2cm}\\
e^x+e^{-x} & \mbox{for}\  N=1,
\end{array}
\right.
\end{equation}
where $\phi(x)$ is introduced in \cite{YZ06}   and $\rho(t)$, \cite{Palmieri1,Palmieri-Tu,Tu-Lin1,Tu-Lin},   is solution of 
\begin{equation}\label{lambda}
\frac{d^2 \rho(t)}{dt^2}-\rho(t)-\frac{d}{dt}\left(\frac{\mu}{1+t}\rho(t)\right)+\frac{\nu^2}{(1+t)^2}\rho(t)=0.
\end{equation}
Then, the expression of  $\rho(t)$ reads as follows:
\begin{equation}\label{lmabdaK}
\rho(t)=(t+1)^{\frac{\mu+1}{2}}K_{\frac{\sqrt{\de}}{2}}(t+1),
\end{equation}
where 
$$K_{\xi}(t)=\int_0^\infty\exp(-t\cosh \zeta)\cosh(\xi \zeta)d\zeta,\ \xi\in \mathbb{R}.$$

Using for example the equation (18) in \cite{Palmieri1}   (see also  the proof of Lemma 2.1 in \cite{Tu-Lin} (with $\eta=1)$ or \cite{Palmieri-Tu}), we have 
\begin{equation}\label{eqK}
K'_{\xi}(t)=-K_{\xi+1}(t)+\frac{\xi}{t}K_{\xi}(t).
\end{equation}
Hence, we infer that
\begin{equation}\label{lambda'lambda}
\frac{\rho'(t)}{\rho(t)}=\frac{\mu+1+\sqrt{\de}}{2(1+t)}-\frac{K_{\frac{\sqrt{\de}}2 +1}(t+1)}{K_{\frac{\sqrt{\de}}2}(t+1)}.
\end{equation}
From \cite{Gaunt}, we have the following property for the function $K_{\xi}(t)$:
\begin{equation}\label{Kmu}
K_{\xi}(t)=\sqrt{\frac{\pi}{2t}}e^{-t} (1+O(t^{-1}), \quad \text{as} \ t \to \infty.
\end{equation}
Combining \eqref{lambda'lambda} and \eqref{Kmu}, we infer that
\begin{equation}\label{lambda'lambda1}
\frac{\rho'(t)}{\rho(t)}=-1+O(t^{-1}), \quad \text{as} \ t \to \infty.
\end{equation}

Finally, we refer the reader to \cite{Erdelyi} for more details about the properties of the function $K_{\mu}(t)$.
Moreover, the function $\phi(x)$ verifies $\Delta\phi=\phi$.\\
Note that the function $\psi(x, t)$ satisfies the conjugate equation corresponding to \eqref{1.2}, namely we have
\begin{equation}\label{lambda-eq}
\partial^2_t \psi(x, t)-\Delta \psi(x, t) -\frac{\partial}{\partial t}\left(\frac{\mu}{1+t}\psi(x, t)\right)+\frac{\nu^2}{(1+t)^2}\psi(x, t)=0.
\end{equation}

Along  this article, we will denote by $C$  a generic positive constant which may depend on the data ($p,q,\mu,\nu,N,f,g,\ep_0$) but not on $\ep$, and whose value may change from line to line. However, the dependence of the constant $C$ may be described when needed depending on the context.\\

The following lemma  holds true for the function $\psi(x, t)$.
\begin{lem}[\cite{YZ06}]
\label{lem1} Let  $r>1$.
There exists a constant $C=C(N,R,p,r)>0$ such that
\begin{equation}
\label{psi_0}
\int_{|x|\leq t+R}\Big(\psi_0(x,t)\Big)^{r}dx
\leq C(1+t)^{\frac{(2-r)(N-1)}{2}},
\quad\forall \ t\ge0,
\end{equation}
where $\psi_0(x, t):=e^{-t}\phi(x)$, and $\phi(x)$ is given by \eqref{test11}, and
\begin{equation}
\label{psi}
\int_{|x|\leq t+R}\Big(\psi(x,t)\Big)^{r}dx
\leq C\rho^r(t)e^{rt}(1+t)^{\frac{(2-r)(N-1)}{2}},
\quad\forall \ t\ge0,
\end{equation}
where $\psi(x, t)$  is given by \eqref{test11}.
\end{lem}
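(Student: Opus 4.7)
The plan is to reduce both bounds to the pointwise asymptotic
$$\phi(x) \leq C(1+|x|)^{-(N-1)/2} e^{|x|}, \quad x \in \R^N,$$
which is the heart of the Yordanov--Zhou construction. For $N=1$ this is immediate from $\phi(x)=e^x+e^{-x}$. For $N\geq 2$ I would parameterize $\phi(x)=\int_{S^{N-1}}e^{x\cdot\omega}d\omega$ in spherical coordinates centered at the direction $x/|x|$, so that $x\cdot\omega=|x|\cos\theta$, and apply Laplace's method near $\theta=0$: the Gaussian integral over the $(N-1)$-dimensional angular variable produces the factor $|x|^{-(N-1)/2}$, while the exponential maximum contributes $e^{|x|}$. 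A uniform upper bound then follows by absorbing the bounded $|x|\leq 1$ regime into the constant.

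For the first inequality I would raise the pointwise bound to the power $r$ and pass to spherical coordinates on the ball $|x|\leq t+R$:
$$\int_{|x|\leq t+R}\psi_0(x,t)^r dx \leq C\, e^{-rt}\int_0^{t+R} e^{rs}(1+s)^{(N-1)(2-r)/2}\,ds.$$
Since $\frac{d}{ds}\bigl(e^{rs}(1+s)^{(N-1)(2-r)/2}\bigr)\sim r\,e^{rs}(1+s)^{(N-1)(2-r)/2}$ as $s\to\infty$ (the derivative of the polynomial factor is a lower-order perturbation), the integral is controlled by its value at the upper endpoint,
$$\int_0^{t+R} e^{rs}(1+s)^{(N-1)(2-r)/2}\,ds \leq C\,e^{r(t+R)}(1+t)^{(N-1)(2-r)/2}.$$
Absorbing $e^{rR}$ into the constant yields \eqref{psi_0}. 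The three cases $r<2$, $r=2$, $r>2$ all yield the same form (with the convention $(1+t)^0=1$ when $r=2$), since in each case the exponential $e^{rs}$ dominates the behavior near $s=t+R$.

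The second inequality \eqref{psi} follows from the first by a direct algebraic identity: since $\psi(x,t)=\rho(t)\phi(x)=\rho(t)e^{t}\psi_0(x,t)$, one has
$$\int_{|x|\leq t+R}\psi(x,t)^r\,dx = \rho(t)^r e^{rt}\int_{|x|\leq t+R}\psi_0(x,t)^r\,dx,$$
and inserting \eqref{psi_0} concludes. The only genuine obstacle in the argument is the asymptotic for $\phi$ when $N\geq 2$; every subsequent step is an elementary reduction of the integral. Since this is precisely the estimate established in \cite{YZ06}, the cleanest presentation would be to recall the pointwise bound on $\phi$ from that reference and carry out only the one-dimensional radial integration above.
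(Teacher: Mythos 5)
Your proof is correct and is essentially the argument the paper relies on: the paper gives no proof of this lemma, simply citing \cite{YZ06}, and your reconstruction (the pointwise bound $\phi(x)\leq C(1+|x|)^{-(N-1)/2}e^{|x|}$ via Laplace's method, followed by the radial integration and the observation that $\psi=\rho(t)e^{t}\psi_0$) is precisely the standard Yordanov--Zhang argument behind that citation. No discrepancies to report.
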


\par
Now, we define here the functionals that we will use to prove the blow-up criteria later on:
\begin{equation}
\label{F1def}
G_1(t):=\int_{\R^N}u(x, t)\psi(x, t)dx,
\end{equation}
and
\begin{equation}
\label{F2def}
G_2(t):=\int_{\R^N}\p_tu(x, t)\psi(x, t)dx.
\end{equation}

We aim in the following to show that the functionals $G_1(t)$ and $G_2(t)$ are coercive. This will be the first observation that we will use later on to improve the main results of this article. We note here that the proof of Lemma \ref{F1} below is known in the literature; see e.g. \cite{Palmieri1,Tu-Lin1,Tu-Lin}. However, for a later use of some computations in the proof of Lemma \ref{F1}, we choose to detail the steps therein. Nevertheless, Lemmas \ref{F1-2} and \ref{F11} constitute somehow a novelty in this work and their utilization in the proofs of Theorems \ref{blowup}  and  \ref{th_u_t} is fundamental. 

Let us stress out in what follows the particularity of the dynamics of $G_2(t)$ in terms of time $t$. The first observation consists in showing that $G_2(t)$ possesses a negative lower bound; see Lemma \ref{F1-2}  below. Then, the second property states that $G_2(t)$ is coercive starting from relatively large time which is growing as $\ep$ is approaching zero; see Lemma \ref{F11}  below.  Hence, in comparison with our previous work \cite{Our2}, where we studied the same problem \eqref{T-sys} without the mass term ($\nu=0$), we note here that the functional $G_2(t)$, while dealing with the mass term, exhibits a different behavior. More precisely, we remark that $G_2(t)$ starts with a positive value (since the initial data are positive) and then it may take some negative values, maybe several times, to end up with the coercive characteristics for large time. However, the functional $G_1(t)$  is coercive starting from a positive finite time which is independent of $\ep$.

\begin{lem}
\label{F1}
Let $u$ be an energy solution of the system \eqref{T-sys} with initial data satisfying the assumptions in Theorem \ref{blowup}. Then, there exists $T_0=T_0(\mu,\nu)>1$ such that 
\begin{equation}
\label{F1postive}
G_1(t)\ge C_{G_1}\, \e, 
\quad\text{for all}\ t \ge T_0,
\end{equation}
where $C_{G_1}$ is a positive constant which may depend on $f$, $g$, $N, \mu$ and $\nu$.
\end{lem}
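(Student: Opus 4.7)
The plan is to derive a coercivity estimate for $G_1(t)$ by introducing an auxiliary first-order quantity $H(t)$ built through the multiplier $\rho(t)$ solving \eqref{lambda}. Set $F_1(t):=\int_{\R^N}u(x,t)\phi(x)\,dx$, so that $G_1(t)=\rho(t)F_1(t)$. Using $\phi$ as a test function in \eqref{energysol2} (legitimized by a cut-off argument exploiting the compact support $|x|\le t+R$ of $u$) together with $\Delta\phi=\phi$, and then differentiating in time, I obtain
\[
F_1''(t)+\frac{\mu}{1+t}F_1'(t)+\frac{\nu^2}{(1+t)^2}F_1(t)-F_1(t)=\mathcal{N}(t):=\int_{\R^N}(|u_t|^p+|u|^q)\phi\,dx\ \ge\ 0.
\]

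Now I multiply by $\rho(t)$, use the identity $\rho F_1''=(\rho F_1'-\rho'F_1)'+\rho''F_1$, and substitute $\rho''$ from \eqref{lambda}. The terms in $F_1$ and $F_1'$ assemble into a single total derivative:
\[
\frac{d}{dt}H(t)=\rho(t)\mathcal{N}(t)\ge 0,\qquad H(t):=\rho(t)F_1'(t)+\Big(\frac{\mu\rho(t)}{1+t}-\rho'(t)\Big)F_1(t).
\]
Thus $H$ is non-decreasing on $[0,\infty)$ and $H(t)\ge H(0)$. Evaluating at $t=0$ via \eqref{lambda'lambda} gives $\mu-\rho'(0)/\rho(0)=\frac{\mu-1-\sqrt{\de}}{2}+K_{\sqrt{\de}/2+1}(1)/K_{\sqrt{\de}/2}(1)$, hence
\[
H(0)=\varepsilon\,\rho(0)\int_{\R^N}\Big[\big(\tfrac{\mu-1-\sqrt{\de}}{2}f+g\big)+\tfrac{K_{\sqrt{\de}/2+1}(1)}{K_{\sqrt{\de}/2}(1)}\,f\Big]\phi\,dx\ \ge\ C_0\,\varepsilon,
\]
with $C_0=C_0(f,g,\mu,\nu)>0$: the first bracketed summand is non-negative and not identically zero by \eqref{hypfg}, while the second is strictly positive thanks to $K_\xi(1)>0$, $f\ge 0$, $f\not\equiv 0$, and $\phi>0$.

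Finally, since $G_1'(t)=\rho(t)F_1'(t)+\rho'(t)F_1(t)$, the monotone quantity rewrites as $H(t)=G_1'(t)+a(t)G_1(t)$ with $a(t):=\frac{\mu}{1+t}-2\rho'(t)/\rho(t)$. Setting the integrating factor $M(t):=\exp\big(\int_0^t a(s)\,ds\big)$ and integrating the identity $(MG_1)'=MH$ from $0$ to $t$ yields
\[
G_1(t)=\frac{G_1(0)}{M(t)}+\frac{1}{M(t)}\int_0^t M(s)H(s)\,ds\ \ge\ \frac{C_0\,\varepsilon}{M(t)}\int_0^t M(s)\,ds.
\]
By \eqref{lambda'lambda1}, $\rho'/\rho=-1+O(t^{-1})$, so $a(t)=2+O(t^{-1})$ and $M'(t)/M(t)\to 2$; one L'H\^opital step then gives $M(t)^{-1}\int_0^t M(s)\,ds\to 1/2$ as $t\to\infty$. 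Consequently there is $T_0=T_0(\mu,\nu)>1$ beyond which the right-hand side exceeds $C_{G_1}\,\varepsilon$, proving \eqref{F1postive}. The delicate step is the verification $H(0)>0$, which is precisely where the assumption \eqref{hypfg} meets the positivity of modified Bessel functions; the monotonicity of $H$ and the asymptotic analysis of $M$ are routine once $\rho$ is chosen as in \eqref{lambda}.
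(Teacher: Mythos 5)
Your proposal is correct and follows essentially the same route as the paper: your monotone quantity $H(t)=\rho F_1'+(\tfrac{\mu\rho}{1+t}-\rho')F_1$ is exactly the left-hand side of the paper's identity obtained by testing with $\psi=\rho\phi$ against the conjugate equation, your $H(0)$ is the paper's $\e\,C(f,g)$, and your integrating factor $M(t)$ coincides (up to the constant $\rho^2(0)$) with the paper's $(1+t)^{\mu}/\rho^{2}(t)$. The only cosmetic difference is that you conclude via L'H\^opital using $\Gamma(t)\to 2$, whereas the paper invokes the explicit Bessel asymptotics \eqref{Kmu} to bound $\rho^{2}(t)(1+t)^{-\mu}\int_{t/2}^{t}(1+s)^{\mu}\rho^{-2}(s)\,ds$ from below; both rest on \eqref{lambda'lambda1}.
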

\begin{proof} 
Let $t \in [0,T)$.  Using Definition \ref{def1} and  performing an integration by parts in space in the fourth term in the left-hand side of \eqref{energysol2}, we obtain
\begin{equation}\label{eq3}
\begin{array}{l}
\d \int_{\R^N}u_t(x,t)\Phi(x,t)dx
-\e\int_{\R^N}g(x)\Phi(x,0)dx -\int_0^t\int_{\R^N}\left\{
u_t(x,s)\Phi_t(x,s)+u(x,s)\Delta\Phi(x,s)\right\}dx \, ds\vspace{.2cm}\\
\d +\int_0^t  \int_{\R^N}\frac{\mu}{1+s}u_t(x,s) \Phi(x,s)dx \,ds+\int_0^t  \int_{\R^N}\frac{\nu^2}{(1+s)^2}u(x,s) \Phi(x,s)dx \,ds\vspace{.2cm}\\
\d=\int_0^t\int_{\R^N}\left\{|u_t(x,s)|^p+|u(x,s)|^q\right\}\Phi(x,s)dx \, ds, \quad \forall \ \Phi\in \mathcal{C}_0^{\infty}(\R^N\times[0,T)).
\end{array}
\end{equation}
Substituting in \eqref{eq3} $\Phi(x, t)$ by $\psi(x, t)$, performing an integration by parts for third term in the first line and the first term in the second line of \eqref{eq3} and utilizing \eqref{test11} and \eqref{lambda-eq}, we obtain
\begin{equation}
\begin{array}{l}\label{eq5}
\d \int_{\R^N}\big[u_t(x,t)\psi(x,t)- u(x,t)\psi_t(x,t)+\frac{\mu}{1+t}u(x,t) \psi(x,t)\big]dx
\vspace{.2cm}\\
\d=\int_0^t\int_{\R^N}\left\{|u_t(x,s)|^p+|u(x,s)|^q\right\}\psi(x,s)dx \, ds 
+\d \e \, C(f,g),
\end{array}
\end{equation}
where 
\begin{equation}\label{Cfg}
C(f,g):=\int_{\R^N}\big[\big(\mu\rho(0)-\rho'(0)\big)f(x)+\rho(0)g(x)\big]\phi(x)dx.
\end{equation}
Using \eqref{lmabdaK}--\eqref{lambda'lambda}, we infer that
\begin{equation}\label{Cfg1}
\mu\rho(0)-\rho'(0)=\frac{\mu-1-\sqrt{\de}}{2}K_{\frac{\sqrt{\de}}{2}}(1)+K_{\frac{\sqrt{\de}}2 +1}(1).
\end{equation}
Hence,  we have
\begin{equation}\label{Cfg}
C(f,g)=K_{\frac{\sqrt{\de}}{2}}(1) \int_{\R^N} \big[\frac{\mu-1-\sqrt{\de}}{2}f(x)+g(x)\big]\phi(x)dx +K_{\frac{\sqrt{\de}}2 +1}(1)\int_{\R^N}g(x)\phi(x)dx.
\end{equation}
Thanks to \eqref{hypfg} we deduce  that the constant $C(f,g)$ is positive.\\
Hence, using the definition of $G_1$, as in \eqref{F1def},  and \eqref{test11}, the equation  \eqref{eq5} yields
\begin{equation}
\begin{array}{l}\label{eq6}
\d G_1'(t)+\Gamma(t)G_1(t)=\int_0^t\int_{\R^N}\left\{|u_t(x,s)|^p+|u(x,s)|^q\right\}\psi(x,s)dx \, ds +\e \, C(f,g),
\end{array}
\end{equation}
where 
\begin{equation}\label{gamma}
\Gamma(t):=\frac{\mu}{1+t}-2\frac{\rho'(t)}{\rho(t)}.
\end{equation}
Multiplying  \eqref{eq6} by $\frac{(1+t)^\mu}{\rho^2(t)}$ and integrating over $(0,t)$, we obtain
\begin{align}\label{est-G1}
 G_1(t)
\ge G_1(0)\frac{\rho^2(t)}{(1+t)^\mu}+{\e}C(f,g)\frac{\rho^2(t)}{(1+t)^\mu}\int_0^t\frac{(1+s)^\mu}{\rho^2(s)}ds.
\end{align}
Observing that $G_1(0)=\ep K_{\frac{\sqrt{\de}}{2}}(1)\int_{\R^N}f(x) \phi(x)dx>0$ and using \eqref{lmabdaK}, the estimate \eqref{est-G1} implies that
\begin{align}\label{est-G1-1}
 G_1(t)
\ge {\e}C(f,g)(1+t)K^2_{\frac{\sqrt{\de}}{2}}(t+1)\int^t_{t/2}\frac{1}{(1+s)K^2_{\frac{\sqrt{\de}}{2}}(s+1)}ds.
\end{align}
Using \eqref{Kmu}, we infer that there exists $T_0=T_0(\mu,\nu)>1$ such that 
\begin{align}\label{est-double}
(1+t)K^2_{\frac{\sqrt{\de}}{2}}(t+1)>\frac{\pi}{4} e^{-2(t+1)} \quad \text{and}  \quad (1+t)^{-1}K^{-2}_{\frac{\sqrt{\de}}{2}}(t+1)>\frac{1}{\pi} e^{2(t+1)}, \ \forall \ t \ge T_0/2.
\end{align}
Hence, we have
\begin{align}\label{est-G1-2}
 G_1(t)
\ge \frac{\e}{4}C(f,g)e^{-2t}\int^t_{t/2}e^{2s}ds\ge \frac{\e}{8}C(f,g)e^{-2t}(e^{2t}-e^{t}), \ \forall \ t \ge T_0.
\end{align}
Finally, using $e^{2t}>2e^{t}, \forall \ t \ge 1$, we deduce that
\begin{align}\label{est-G1-3}
 G_1(t)
\ge \frac{\e}{16}C(f,g), \ \forall \ t \ge T_0.
\end{align}

This ends the proof of Lemma \ref{F1}.
\end{proof}

As mentioned above, we think that the functional $G_2(t)$ cannot be nonnegative for all $t \ge 0$ (see the Appendix for some numerical simulations). 
However, we will prove in the following lemma that this functional possesses a negative lower bound independent of $\ep$.

\begin{lem}
\label{F1-2}
Assume the existence of an energy solution $u$ of the system \eqref{T-sys}  with initial data satisfying the hypotheses in Theorem \ref{blowup}. Then, for all $t \in (0,T)$, we have 
\begin{align}\label{G2+bis7}
\d G_2(t)+\mathcal{K}\nu^2\left\{ 1 +\nu^{\frac{2}{p-1}}e^{\frac{p}{p-1}t} (1+t)^{\frac{N-1}{2}} \right\} \ge  0,
\end{align}
where $\mathcal{K}$ is a positive constant which may depend on $p, f$, $g$, $N,R,  \ep_0$ and $\mu$ but not on $\ep$ and $\nu$\footnote{ We choose here to make explicit the dependence of the constant $\mathcal{K}$ on $\nu$ to point out the difference between the cases with and without the mass term.}.
\end{lem}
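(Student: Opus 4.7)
The strategy is to derive a first-order linear ODE for $G_2(t)$, solve it via an integrating factor, isolate the single sign-problematic term ($-\nu^2 G_1/(1+s)^2$), and then control this term using H\"older's and Young's inequalities together with the nonlinear contribution $\int|u_t|^p \psi\, dx$.

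Differentiating $G_2(t)=\int u_t\psi\, dx$ and substituting \eqref{T-sys} for $u_{tt}$, using $\int u\,\Delta\psi\, dx = G_1(t)$ (since $\Delta\phi=\phi$) and $\psi_t=(\rho'/\rho)\psi$, yields
\begin{equation*}
G_2'(t) + \Bigl(\tfrac{\mu}{1+t} - \tfrac{\rho'(t)}{\rho(t)}\Bigr) G_2(t) = \Bigl(1 - \tfrac{\nu^2}{(1+t)^2}\Bigr) G_1(t) + \int_{\R^N}(|u_t|^p + |u|^q)\psi\, dx.
\end{equation*}
The integrating factor $I(t):=(1+t)^\mu/\rho(t)$ satisfies $I'/I=\mu/(1+t)-\rho'/\rho$, so integration from $0$ to $t$ gives an explicit representation for $G_2(t)$. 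Since $G_2(0)=\varepsilon\rho(0)\int g\phi\, dx\ge 0$, $G_1(s)\ge 0$ (verified along the lines of Lemma \ref{F1}), and the nonlinear integrands are nonnegative, the only possibly negative contribution comes from $-\nu^2 G_1(s)/(1+s)^2$, yielding
\begin{equation*}
-G_2(t) \le \frac{\nu^2 \rho(t)}{(1+t)^\mu}\int_0^t \frac{(1+s)^{\mu-2}}{\rho(s)}G_1(s)\, ds.
\end{equation*}

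To control $G_1(s)$ I exploit the identity $(G_1/\rho)'=G_2/\rho$, which gives $G_1(s)=\rho(s)\bigl[\varepsilon\int f\phi\, dx + \int_0^s G_2(\tau)/\rho(\tau)\, d\tau\bigr]$. The initial-data contribution, once substituted back, produces a bounded term of order $\mathcal{K}\nu^2$ using the asymptotics $\rho\sim(1+t)^{\mu/2}e^{-t}$ coming from \eqref{Kmu}. For the $G_2$-dependent piece I use H\"older in space to bound $|G_2(\tau)|\le(\int|u_t|^p\psi\, dx)^{1/p}(\int\psi\, dx)^{1/p'}$, and invoke Lemma \ref{lem1} with $r=1$ to estimate $\int\psi(\cdot,\tau)\, dx\le C\rho(\tau)e^\tau(1+\tau)^{(N-1)/2}$. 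A further H\"older in time with exponents $p$ and $p'=p/(p-1)$ converts $\int_0^s|G_2(\tau)|/\rho(\tau)\, d\tau$ into a product of $(\int_0^s\int|u_t|^p\psi\, dx\, d\tau)^{1/p}$ and a deterministic factor whose asymptotic growth is $e^s(1+s)^\alpha$. A Young's inequality with the same conjugate exponents $p,p'$ then splits this product into an absorbable $\int|u_t|^p\psi$ piece and a remainder proportional to $\nu^{2p/(p-1)}e^{pt/(p-1)}(1+t)^{(N-1)/2}$. The absorbed $\int\int|u_t|^p\psi$ is controlled by means of the direct identity $G_2(t) + \Gamma_2(t) G_1(t) = \varepsilon C(f,g) + \int_0^t\int(|u_t|^p+|u|^q)\psi\, dx\, ds$ obtained by integrating by parts in time and invoking \eqref{lambda-eq}.

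The principal difficulty is the self-referential character of the estimate: $G_1$ is expressed through $G_2$, the resulting bound on $-G_2$ involves $\int_0^t\int|u_t|^p\psi$, and this latter quantity is in turn controlled by $G_1$ and $G_2$ via the direct equation above. Closing the loop requires careful book-keeping of the powers of $(1+t)$, $e^t$, $\rho(t)$, and $\nu$, exploiting the exact cancellation $\rho(s)e^s\sim(1+s)^{\mu/2}$ inside the integrating factor $I(t)$, so that the exponential rate $e^{pt/(p-1)}$ and polynomial factor $(1+t)^{(N-1)/2}$ emerge with the precise $\nu$-prefactor $\nu^{2p/(p-1)}=\nu^2\cdot\nu^{2/(p-1)}$ demanded by the statement.
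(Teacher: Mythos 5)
Your overall strategy is, up to a change of variables, the paper's own: the paper works with $F_i(t)=e^{t}\rho^{-1}(t)G_i(t)$ and the test function $\psi_0=e^{-t}\phi$ rather than with $G_i$ and $\psi=\rho\phi$ directly, but the skeleton is identical --- integrate the first-order equation for the velocity functional, observe that the only sign-indefinite contribution is the mass term $-\nu^2 G_1/(1+s)^2$, express $G_1$ back in terms of $G_2$ via $(G_1/\rho)'=G_2/\rho$, and then trade $|G_2(\tau)|$ for $\int|u_t|^p\psi\,dx$ plus an explicit remainder by H\"older and Young (this is exactly \eqref{f2-less}--\eqref{f2-less-1} in the paper). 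The asymptotics $\rho(t)e^{t}\le C(1+t)^{\mu/2}$ and the appearance of the prefactor $\nu^{2p/(p-1)}=\nu^2\nu^{2/(p-1)}$ are also handled as you describe.

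The genuine problem is your final closing step. After Young's inequality you are left with a term of the form $\eta\int_0^t\int|u_t|^p\psi\,dx\,d\tau$, and you propose to ``control'' it by the identity $G_2(t)+\bigl(\tfrac{\mu}{1+t}-\tfrac{\rho'}{\rho}\bigr)G_1(t)=\e C(f,g)+\int_0^t\int(|u_t|^p+|u|^q)\psi\,dx\,ds$ (the paper's \eqref{eq5bis}). That identity bounds the nonlinear integral from above only by $G_2(t)+\Gamma_2(t)G_1(t)$, so your inequality for $-G_2(t)$ acquires the term $\eta\,\Gamma_2(t)G_1(t)$ on its right-hand side; since there is no a priori \emph{upper} bound on $G_1(t)$ (only the lower bound of Lemma \ref{F1} is available, and in the blow-up regime $G_1$ grows), the argument does not close as written. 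The identity is in fact unnecessary: the integrating-factor representation of $G_2(t)$ already contains the positive term $I(t)^{-1}\int_0^t I(\tau)\int|u_t|^p\psi\,dx\,d\tau$ with $I(\tau)=(1+\tau)^{\mu}/\rho(\tau)$, and the Young step must be set up so that its absorbable output carries this same weight $I(\tau)$ (or, equivalently, a $t$-dependent Young parameter compensating the weight mismatch), after which it is swallowed by that positive term and only the explicit $\nu^{2p/(p-1)}e^{pt/(p-1)}(1+t)^{(N-1)/2}$ remainder survives. This direct cancellation is precisely what the paper performs in \eqref{F1+bis45} and \eqref{f2-less-1}, where the weights on the two copies of $\int_0^t\int|u_t|^p\psi_0\,dx\,ds$ match exactly. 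With that repair your argument goes through; as described, the last step is circular.
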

\begin{proof} 
Let $t \in [0,T)$. Then, using Definition \ref{def2},  performing an integration by parts in space in the fourth term in the left-hand side of \eqref{energysol} and choosing $\psi_0(x, t)$ as a test function\footnote{ Note that  it is possible to consider here not compactly supported test
functions thanks to the support property of $u$. Indeed, it is sufficient to replace $\psi_0(x, t)$ by $\psi_0(x, t) \chi(x, t)$ where $\chi$ is compactly supported such that $\chi(x, t)\equiv 1$ on $\mbox{\rm supp}(u)$.},  we infer that
\begin{equation}
\begin{array}{l}\label{eq4}
\d m(t)\int_{\R^N}u_t(x,t)\psi_0(x,t)dx
-\e\int_{\R^N}g(x)\psi_0(x,0)dx \vspace{.2cm}\\
\d+\int_0^tm(s)\int_{\R^N}\left\{
u_t(x,s)\psi_0(x,s)-u(x,s)\psi_0(x,s)\right\}dx \, ds \vspace{.2cm}\\
\d+\int_0^t  \int_{\R^N}\frac{\nu^2m(s)}{(1+s)^2}u(x,s) \psi_0(x,s)dx \,ds\vspace{.2cm}\\
\d=\int_0^tm(s)\int_{\R^N}\left\{|u_t(x,s)|^p+|u(x,s)|^q\right\}\psi_0(x,s)dx \, ds.
\end{array}
\end{equation}
We  introduce the following functionals
\begin{equation}
\label{F1def1}
F_1(t):=\int_{\R^N}u(x, t)\psi_0(x, t)dx, 
\end{equation}
and
\begin{equation}
\label{F2def1}
F_2(t):=\int_{\R^N}u_t(x, t)\psi_0(x, t)dx,
\end{equation}
where $\psi_0(x, t):=e^{-t}\phi(x)$, and $\phi(x)$ is given by \eqref{test11}.\\
Hence, using the definition of $F_1$ and the fact that $$\d \int_0^tm(s)F_1'(s) ds=- \int_0^tm'(s)F_1(s) ds+m(t)F_1(t)-F_1(0),$$ the equation  \eqref{eq4} yields
\begin{equation}
\begin{array}{l}\label{eq5-1}
\d m(t)(F_1'(t)+2F_1(t))
-{\e}C_0(f,g) +\int_0^t  \frac{\nu^2m(s)}{(1+s)^2}F_1(s) \,ds \vspace{.2cm}\\
\d=\int_0^tm'(s)F_1(s) ds+\int_0^tm(s)\int_{\R^N}\left\{|u_t(x,s)|^p+|u(x,s)|^q\right\}\psi_0(x,s)dx \, ds,
\end{array}
\end{equation}
where 
$$C_0(f,g):=\int_{\R^N}\left\{f(x)+g(x)\right\}\phi(x)dx.$$
Hence, using the definition of $F_1$ and  $F_2$, given respectively by \eqref{F1def} and  \eqref{F2def}, and the fact that
 \begin{equation}\label{def231}\d F_1'(t) +F_1(t)= F_2(t),\end{equation}
 the equation  \eqref{eq5-1} yields
\begin{equation}
\begin{array}{l}\label{eq5bis1}
\d m(t)(F_2(t)+F_1(t))
-{\e}C_0(f,g) +\int_0^t  \frac{\nu^2m(s)}{(1+s)^2}F_1(s) \,ds \vspace{.2cm}\\
\d=\int_0^tm'(s)F_1(s) ds+\int_0^tm(s)\int_{\R^N}\left\{|u_t(x,s)|^p+|u(x,s)|^q\right\}\psi_0(x,s)dx \, ds.
\end{array}
\end{equation}
Differentiating the  equation \eqref{eq5bis1} in time and using \eqref{def231}, we obtain
\begin{align}\label{F1+bis1}
\frac{d}{dt} \left\{F_2(t)m(t)\right\}+   2m(t)F_2(t)
= m(t)(F_1(t)+F_2(t))-\frac{\nu^2m(t)}{(1+t)^2}F_1(t)\\+m(t)\int_{\R^N}\left\{|u_t(x,t)|^p+|u(x,t)|^q\right\}\psi_0(x,t)dx.\nonumber
\end{align}
Using   \eqref{eq5bis1}, the identity \eqref{F1+bis1} becomes
\begin{align}\label{F1+bis3}
\begin{array}{l}
\d \frac{d}{dt} \left\{F_2(t)m(t)\right\}+   2m(t)F_2(t)
=\d  {\e}C_0(f,g) \vspace{.2cm}\\
\d +\int_0^tm(s)\int_{\R^N}\left\{|u_t(x,s)|^p+|u(x,s)|^q\right\}\psi_0(x,s)dx \, ds\vspace{.2cm}\\
\d +m(t)\int_{\R^N}\left\{|u_t(x,t)|^p+|u(x,t)|^q\right\}\psi_0(x,t)dx+\Sigma_1(t)+\nu^2\Sigma_2(t)+\nu^2\Sigma_3(t),
\end{array}
\end{align}
where $(m(t)=(1+t))^{\mu})$
\begin{equation}\label{sigma11}
\d \Sigma_1(t)=\d  \int_0^t m'(s)F_1(s) ds=\d \mu \int_0^t (1+s)^{\mu-1}F_1(s) ds,
\end{equation}
\begin{equation}\label{sigma11--2}
\d \Sigma_2(t)=\d  -\int_0^t  \frac{m(s)}{(1+s)^2}F_1(s) \,ds= -\int_0^t  (1+s)^{\mu-2}F_1(s) \,ds,
\end{equation}
and
\begin{equation}\label{sigma21}
\Sigma_3(t)=-\frac{m(t)}{(1+t)^2}F_1(t)=-  (1+t)^{\mu-2}F_1(t).
\end{equation}
Thanks to \eqref{est-G1} and the fact that $G_1(t)=e^{t} \rho(t)F_1(t)$, we deduce that $\Sigma_1(t) \ge 0$.\\
From \eqref{def231}, we obtain
\begin{equation}\label{def2313}\d F_1(t)= F_1(0)e^{-t} +e^{-t} \int_0^t e^{s}F_2(s)ds,\end{equation}
that we plug in   \eqref{sigma11--2} and we integrate by parts, we deduce that
\begin{eqnarray}
&& \int_0^t  (1+s)^{\mu-2}F_1(s) \,ds= F_1(0)\int_0^t (1+s)^{\mu-2}e^{-s} ds\\
&& + \left(\int_0^t (1+s)^{\mu-2}e^{-s}ds\right) \left(\int_0^t e^{s}F_2(s)  ds\right)- \int_0^t e^{s}F_2(s)\left(\int_0^s(1+\tau)^{\mu-2}e^{-\tau} d\tau\right) ds.
\nonumber
\end{eqnarray}
Hence, we infer that
\begin{equation}\label{sigma111}
\big| \int_0^t (1+s)^{\mu-2}F_1(s) ds \big| \le C F_1(0) + C \int_0^t e^{s}|F_2(s)| ds.
\end{equation}
Therefore we have
\begin{equation}\label{sigma111-1}
|\Sigma_2(t)| \le C F_1(0) + C \int_0^t e^{s}|F_2(s)| ds.
\end{equation}
Using  \eqref{def2313} and similar estimates as for $\Sigma_2(t)$, we easily conclude that
\begin{equation}\label{sigma1112}
|\Sigma_3(t)| \le C F_1(0) + C \int_0^t |F_2(s)| ds.
\end{equation}
Employing \eqref{F1def1}, we recall here that $F_1(0)=\ep \int_{\R^N}f(x)\phi(x)dx$.\\
Combining \eqref{sigma111-1} and \eqref{sigma1112} in \eqref{F1+bis3} and using $m(t) \ge 1$, we obtain
\begin{align}\label{F1+bis45}
\begin{array}{rcl}
\d \frac{d}{dt} \left\{F_2(t)m(t)\right\}+   2m(t)F_2(t)
&\ge& \d  \int_0^t\int_{\R^N}|u_t(x,s)|^p\psi_0(x,s)dx \, ds\vspace{.2cm}\\
\d &&\d -C_0 \ep_0\nu^2  - C_0 \nu^2 \int_0^t e^{s}|F_2(s)| ds,
\end{array}
\end{align}
where $C_0=C_0(\mu, f, N)$.\\
Using the definition of $F_2(t)$, given by \eqref{F2def1}, and Lemma \ref{lem1}, we have
\begin{equation}\label{f2-less}
\begin{array}{rcl}
\d C_0\nu^2 e^{t}|F_2(t)| &\le&\d  \int_{\R^N}|u_t(x,t)|^p\psi_0(x,t)dx + C\nu^{\frac{2p}{p-1}}e^{\frac{p}{p-1}t}\int_{|x|\leq t+R}\psi_0(x,t)dx \vspace{.2cm}\\
&\le& \d \int_{\R^N}|u_t(x,t)|^p\psi_0(x,t)dx + C \nu^{\frac{2p}{p-1}} e^{\frac{p}{p-1}t}(1+t)^{\frac{N-1}{2}}.
\end{array}
\end{equation}
Integrating \eqref{f2-less} in time yields
\begin{equation}\label{f2-less-1}
C_0 \nu^2\int_0^t e^{s}|F_2(s)| ds \le  \int_0^t\int_{\R^N}|u_t(x,s)|^p\psi_0(x,s)dx \, ds + C \nu^{\frac{2p}{p-1}} e^{\frac{p}{p-1}t} (1+t)^{\frac{N-1}{2}}.
\end{equation}
From \eqref{F1+bis45} and \eqref{f2-less-1} we infer that
\begin{align}\label{F1+bis5}
\begin{array}{rcl}
\d \frac{d}{dt} \left\{F_2(t)m(t)\right\}+   2m(t)F_2(t)
+C \nu^2+ C \nu^{\frac{2p}{p-1}}e^{\frac{p}{p-1}t} (1+t)^{\frac{N-1}{2}} \ge 0,
\end{array}
\end{align}
which can be written as
\begin{align}\label{F1+bis4}
\frac{d}{dt} \left\{e^{2t}F_2(t)m(t)\right\}+C \nu^2e^{2t}+C \nu^{\frac{2p}{p-1}}e^{\frac{3p-2}{p-1}t} (1+t)^{\frac{N-1}{2}}\ge 0.
\end{align}
Integrating the above inequality in time gives
\begin{align}\label{F1+bis5}
F_2(t) +C \nu^2 \frac{e^{-2t}}{m(t)}\int_0^t e^{2s}  ds +C\nu^{\frac{2p}{p-1}} \frac{e^{-2t}}{m(t)} \int_0^t e^{\frac{3p-2}{p-1}s} (1+s)^{\frac{N-1}{2}} ds \ge\frac{e^{-2t}}{m(t)}F_2(0) \ge 0.
\end{align}
Hence, we deduce that
\begin{align}\label{F1+bis6}
F_2(t) +C \nu^2 (1+t)^{-\mu}+C \nu^{\frac{2p}{p-1}}e^{\frac{p}{p-1}t} (1+t)^{\frac{N-1}{2}-\mu} \ge 0.
\end{align}
Recall that $G_2(t)=e^{t} \rho(t)F_2(t)$, we obtain
\begin{align}\label{G2+bis61}
G_2(t)+C \nu^2 e^{t} \rho(t)(1+t)^{-\mu}+C \nu^{\frac{2p}{p-1}}e^{t} \rho(t)e^{\frac{p}{p-1}t} (1+t)^{\frac{N-1}{2}-\mu} \ge  0.
\end{align}
On the other hand, using \eqref{lmabdaK} and \eqref{est-double}, we get
 \begin{equation}\label{pho-est-bis}
 \d \rho(t)e^{t} \le C (1+t)^{\frac{\mu}{2}}, \ \forall \ t \ge 0.
 \end{equation}
Finally, from \eqref{G2+bis61} and \eqref{pho-est-bis}, we conclude \eqref{G2+bis7}.

This ends the proof of Lemma \ref{F1-2}.
\end{proof}

\begin{rem}
We note that $G_1(t)$ is positive for all $t \in (0,T)$ thanks to \eqref{est-G1}, and accordingly the same holds for $F_1(t)$.  However, $G_2(t)$ may not be positive all the time and so is for $F_2(t)$; see  the figures in the Appendix. 
 In fact, the functional  $G_2(t)$ may start with negative values for small times.
\end{rem}

\begin{rem}
The estimate \eqref{G2+bis7}, obtained in Lemma \ref{F1-2} for $G_2(t)$, constitutes a first observation useful in obtaining later on the lower bound for $G_2(t)$ for $t$ large enough. In fact, the negative bound in  \eqref{G2+bis7} is due to the presence of a mass term in \eqref{T-sys}. Obviously, for $\nu =0$, we find again here the known result on the positivity of $G_2(t)$ in the absence of the mass term, see e.g. \cite{Our2}.
\end{rem}

We will see in the following that the functional $G_2(t)$, after taking some negative values for small time, becomes positive for large time.  The last assertion is obtained in Lemma \ref{F11} below thanks to to the compensation of the negative sign of the linear part in the functional $G_2(t)$ by the time derivative nonlinearity. However, the nonlinearity $|u|^q$ is not involved in the proofs of Lemmas \ref{F1-2} and \ref{F11}. This allows us to use the result in Lemma \ref{F11} for the problem \eqref{T-sys-bis} to prove Theorem \ref{th_u_t} in Section \ref{sec-ut} below.

Now we are in a position to prove  the following lemma.
\begin{lem}\label{F11}
For any energy solution $u$ of the system \eqref{T-sys} with initial data satisfying the assumptions in Theorem \ref{blowup},  there exists $T_1>0$ such that 
\begin{equation}
\label{F2postive}
G_2(t)\ge C_{G_2}\, \e, 
\quad\text{for all}\ t  \ge  T_1=-\ln(\e),
\end{equation}
where $C_{G_2}$ is a positive constant which depends on $p,f$, $g$, $N,R,\ep_0,\nu$ and $\mu$.
\end{lem}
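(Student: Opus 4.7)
The plan is to combine the differential equation (\ref{F1+bis1}) with an exponential integrating factor so that the $\pm m F_2$ pieces cancel, and to then insert the lower bound on $F_1$ coming from Lemma \ref{F1} as a driving term. The boundary residue is controlled by Lemma \ref{F1-2} evaluated at a \emph{fixed finite time} (not at the running time $t$), so that its otherwise-dangerous factor $e^{pt/(p-1)}$ contributes only an $\e$-independent constant.

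Concretely, I would first rearrange (\ref{F1+bis1}) as
$$(F_2 m)'(t) + m(t)F_2(t) = m(t)F_1(t)\Bigl(1-\tfrac{\nu^2}{(1+t)^2}\Bigr) + m(t)\int_{\R^N}(|u_t|^p + |u|^q)\psi_0(x,t)\,dx,$$
and multiply through by $e^t$; the two $\pm e^t m F_2$ terms cancel exactly, leaving
$$\frac{d}{dt}\bigl(e^t F_2(t) m(t)\bigr) = e^t m(t)\Bigl[F_1(t)\Bigl(1-\tfrac{\nu^2}{(1+t)^2}\Bigr) + \int_{\R^N}(|u_t|^p + |u|^q)\psi_0\,dx\Bigr].$$
The right-hand side is non-negative for $t$ beyond a fixed threshold $T_0^\star = T_0^\star(\mu,\nu)$ ensuring $1 - \nu^2/(1+t)^2 \geq 1/2$. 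Discarding the (positive) nonlinear contributions and invoking Lemma \ref{F1} together with the upper asymptotic $e^t\rho(t) \leq C(1+t)^{\mu/2}$ extracted from \eqref{lmabdaK}--\eqref{Kmu} gives $F_1(t) \geq c_1 \e (1+t)^{-\mu/2}$, so that
$$\frac{d}{dt}\bigl(e^t F_2(t) m(t)\bigr) \;\geq\; \tfrac{c_1}{2}\,\e\, e^t (1+t)^{\mu/2}, \qquad t \geq T_0^\star.$$

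I would then integrate from $T_0^\star$ to $t$, using the elementary bound $\int_{T_0^\star}^t e^s(1+s)^{\mu/2}\,ds \geq \tfrac{1}{2} e^t(1+t)^{\mu/2}$ (valid for $t$ beyond some fixed threshold, proved by one integration by parts) and Lemma \ref{F1-2} at $t=T_0^\star$ to get $e^{T_0^\star} F_2(T_0^\star) m(T_0^\star) \geq -C_\star$ with $C_\star$ independent of $\e$. This yields
$$e^t F_2(t)m(t) \;\geq\; \tfrac{c_1}{4} \,\e\, e^t(1+t)^{\mu/2} - C_\star.$$
Dividing by $e^t m(t)$ and noting that $e^{-t}\leq \e$ once $t\geq T_1 = -\ln\e$, then multiplying back by the matching lower asymptotic $e^t \rho(t) \geq c(1+t)^{\mu/2}$ (from \eqref{Kmu}) to pass from $F_2$ to $G_2 = e^t\rho(t)F_2$, gives $G_2(t) \geq C_{G_2}\e$ for $t \geq T_1$ and $\e \leq \e_0$ small (the smallness being used to absorb the constant $C_\star$ via the factor $(1+|\ln\e|)^{-\mu/2}$). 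The main subtlety is bookkeeping rather than estimation: Lemma \ref{F1-2} is ruinous if used at the running time $t$ because of the growing $e^{pt/(p-1)}$, but harmless at a fixed finite time, and the logarithmic threshold $T_1 = -\ln\e$ is exactly the time after which the $\e$-driven growth of the integrated inequality dominates that fixed constant penalty.
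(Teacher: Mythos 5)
Your argument is correct, and it reaches the same conclusion by the same overall strategy as the paper (a first-order differential inequality driven by the coercivity of the zeroth-order functional, with Lemma \ref{F1-2} invoked only at a fixed finite time to control the boundary residue, and the threshold $T_1=-\ln\e$ used to absorb that $\e$-independent penalty), but the execution is genuinely different. The paper works at the level of $G_2$ and the $\rho$-weighted identity \eqref{F1+bis2}: it splits the equation into $\Sigma_4+\Sigma_5$, needs the asymptotics $\rho'(t)/\rho(t)=-1+O(t^{-1})$ to show $\Sigma_4\ge C\e$ and $\Sigma_5\ge0$ for large $t$, and then integrates $G_2'+\tfrac{3\Gamma}{4}G_2\ge C\e$ with the factor $(1+t)^{3\mu/4}\rho^{-3/2}(t)$, obtaining $G_2(t)\ge -\tilde{\mathcal K}e^{-3t/2}+C\e$. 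You instead work at the level of $F_2$ and the exact identity \eqref{F1+bis1}, where moving one $mF_2$ across makes $e^t$ an exact integrating factor; the driving term is then $mF_1\bigl(1-\nu^2/(1+t)^2\bigr)$ bounded below via Lemma \ref{F1} and $e^t\rho(t)\asymp(1+t)^{\mu/2}$. This buys you a cleaner computation — no $\Sigma_4/\Sigma_5$ bookkeeping and no need for \eqref{lambda'lambda1} — at the cost of two conversions between $F$'s and $G$'s (hence both the upper and the lower asymptotic for $e^t\rho(t)$, whereas the paper only needs them separately). One small point to tighten: your residual term decays like $e^{-t}(1+t)^{-\mu/2}$ rather than the paper's $e^{-3t/2}$, so at $t=-\ln\e$ it is of size $\e(1+|\ln\e|)^{-\mu/2}$ and the absorption you describe genuinely uses $\mu>0$; for $\mu=0$ (formally allowed in Theorem \ref{blowup}) you would instead enlarge $T_1$ by a fixed $\e$-independent constant, which does not affect the statement. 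Also note that Lemma \ref{F1-2} is stated for $G_2$, so passing to the bound on $F_2(T_0^\star)$ requires dividing by $e^{T_0^\star}\rho(T_0^\star)>0$ — harmless at a fixed time, but worth writing out.
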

 
\begin{proof}
Let $t \in [0,T)$. Using \eqref{test11}, \eqref{F1def}, \eqref{F2def} and the fact that
 \begin{equation}\label{def23}\d G_1'(t) -\frac{\rho'(t)}{\rho(t)}G_1(t)= G_2(t),\end{equation}
 the equation  \eqref{eq6} implies
\begin{equation}
\begin{array}{l}\label{eq5bis}
\d G_2(t)+\left(\frac{\mu}{1+t}-\frac{\rho'(t)}{\rho(t)}\right)G_1(t)\\
=\d \int_0^t\int_{\R^N}\left\{|u_t(x,s)|^p+|u(x,s)|^q\right\}\psi(x,s)dx \, ds +\e \, C(f,g).
\end{array}
\end{equation}
Differentiating in time  \eqref{eq5bis} yields
\begin{align}\label{F1+bis}
\d G_2'(t)+\left(\frac{\mu}{1+t}-\frac{\rho'(t)}{\rho(t)}\right)G'_1(t)-\left(\frac{\mu}{(1+t)^2}+\frac{\rho''(t)\rho(t)-(\rho'(t))^2}{\rho^2(t)}\right)G_1(t) \vspace{.2cm}\\
=\int_{\R^N}\left\{|u_t(x,t)|^p+|u(x,t)|^q\right\}\psi(x,t)dx.\nonumber
\end{align}
Exploiting  \eqref{lambda} and   \eqref{def23}, the equation \eqref{F1+bis} can be written as follows:
\begin{align}\label{F1+bis2}
\d G_2'(t)+\left(\frac{\mu}{1+t}-\frac{\rho'(t)}{\rho(t)}\right)G_2(t)+\left(-1+\frac{\nu^2}{(1+t)^2}\right)G_1(t)\\ =\int_{\R^N}\left\{|u_t(x,t)|^p+|u(x,t)|^q\right\}\psi(x,t)dx. \nonumber
\end{align}
Thanks to the definition of $\Gamma(t)$ given by \eqref{gamma}, we infer that 
\begin{equation}\label{G2+bis3}
\begin{array}{c}
\d G_2'(t)+\frac{3\Gamma(t)}{4}G_2(t)\ge\Sigma_4(t)+\Sigma_5(t)+\int_{\R^N}\left\{|u_t(x,t)|^p+|u(x,t)|^q\right\}\psi(x,t)dx,
\end{array}
\end{equation}
where 
\begin{equation}\label{sigma1-exp}
\Sigma_4(t):=\d \left(-\frac{\rho'(t)}{2\rho(t)}-\frac{\mu}{4(1+t)}\right)\left(G_2(t)+\left(\frac{\mu}{1+t}-\frac{\rho'(t)}{\rho(t)}\right)G_1(t)\right),
\end{equation}
and
\begin{equation}\label{sigma2-exp}
\Sigma_5(t):=\d \left(1-\frac{\nu^2}{(1+t)^2}+\left(\frac{\rho'(t)}{2\rho(t)}+\frac{\mu}{4(1+t)}\right) \left(\frac{\mu}{1+t}-\frac{\rho'(t)}{\rho(t)}\right) \right)  G_1(t).
\end{equation}
Making use of \eqref{eq5bis} and \eqref{lambda'lambda1}, we have the existence of $\tilde{T}_1=\tilde{T}_1(\mu, \nu) \ge T_0$ such that
\begin{equation}\label{sigma1}
\d \Sigma_4(t) \ge C \, \e + \frac14 \int_0^t\int_{\R^N}\left\{|u_t(x,s)|^p+|u(x,s)|^q\right\}\psi(x,s)dx \, ds, \quad \forall \ t \ge \tilde{T}_1. 
\end{equation}
Now, using Lemma \ref{F1} and \eqref{lambda'lambda1}, we deduce that there exists $\tilde{T}_2=\tilde{T}_2(\mu, \nu) \ge \tilde{T}_1(\mu, \nu)$ verifying
\begin{equation}\label{sigma2}
\d \Sigma_5(t) \ge 0, \quad \forall \ t  \ge  \tilde{T}_2. 
\end{equation}
Gathering \eqref{G2+bis3}, \eqref{sigma1} and \eqref{sigma2}, we get
\begin{equation}\label{G2+bis4}
\begin{array}{l}
\d G_2'(t)+\frac{3\Gamma(t)}{4}G_2(t)\ge C \, \e+\int_{\R^N}\left\{|u_t(x,t)|^p+|u(x,t)|^q\right\}\psi(x,t)dx \vspace{.2cm}\\
\d + \frac14 \int_0^t\int_{\R^N}\left\{|u_t(x,s)|^p+|u(x,s)|^q\right\}\psi(x,s)dx \, ds, \quad \forall \ t  \ge  \tilde{T}_2.
\end{array}
\end{equation}
At this level we can ignore the nonlinear terms. In fact, we could remove the nonlinear terms from almost the beginning of the proof (say \eqref{F1+bis} for example), but we adopted to keep the nonlinear terms in \eqref{G2+bis4} to make it useful in the proof of Theorem  \ref{th_u_t} in Section \ref{sec-ut} below. 
Hence, we have
\begin{equation}\label{G2+bis41}
\begin{array}{l}
\d G_2'(t)+\frac{3\Gamma(t)}{4}G_2(t)\ge C \, \e, \quad \forall \ t  \ge  \tilde{T}_2.
\end{array}
\end{equation}
Multiplying  \eqref{G2+bis41} by $\frac{(1+t)^{3\mu/4}}{\rho^{3/2}(t)}$ and integrating over $(\tilde{T}_2,t)$, we infer  that
\begin{align}\label{est-G111-bis}
 G_2(t)
\ge G_2(\tilde{T}_2)\frac{\rho^{3/2}(t)}{(1+t)^{3\mu/4}}+C\,{\e}\frac{\rho^{3/2}(t)}{(1+t)^{3\mu/4}}\int_{\tilde{T}_2}^t\frac{(1+s)^{3\mu/4}}{\rho^{3/2}(s)}ds, \quad \forall \ t  \ge  \tilde{T}_2.
\end{align}
Thanks to \eqref{G2+bis7} we have
\begin{equation}\label{G2sup}
G_2(\tilde{T}_2) \ge - \tilde{\mathcal{K}},
\end{equation}
where $\tilde{\mathcal{K}}:=
\mathcal{K}\nu^2\left\{ 1 +\nu^{\frac{2}{p-1}}e^{\frac{p}{p-1}\tilde{T}_2} (1+\tilde{T}_2)^{\frac{N-1}{2}} \right\}$.\\
Recalling \eqref{est-double} and \eqref{G2sup}, we deduce from \eqref{est-G111-bis} that for all $t \ge \tilde{T}=\tilde{T}(\mu,\nu):=2\tilde{T}_2$, we have
\begin{align}\label{est-G2-12}
 G_2(t)
&\ge    -\tilde{\mathcal{K}} e^{-3t/2}+ C\,{\e}e^{-3t/2} \int^t_{t/2}e^{3s/2}ds\\
&\ge  -\tilde{\mathcal{K}} e^{-3t/2}+ C\,{\e},
\end{align}
Therefore, for $\ep$ small, we get
\begin{align}\label{est-G1-2}
 G_2(t)
\ge  C_{G_2}\,{\e}, \quad \forall \ t \ge T_1:=-\ln(\e).
\end{align}

 This concludes the proof of Lemma
\ref{F11}.
\end{proof}

\begin{rem}\label{rem3.1}
Notice that in the proof of Lemma \ref{F1}  we only used the positivity of each one of the nonlinearities ($|u_t|^p$ and $|u|^q$). Indeed, the result in this lemma is based on the comprehension of the dynamics in the linear part and, thus, the same conclusion can be handled similarly for any positive nonlinearity of the form $\mathcal{N}(u,u_t)$ instead of $|u_t|^p+|u|^q$. Furthermore, in the proof of Lemma \ref{F11} we use  the result on the negative lower bound of $G_2(t)$ obtained in Lemma \ref{F1-2} where we make use of the nonlinearity $|u_t|^p$ to control in part the negativity of $G_2(t)$. Although the nonlinear terms could  be ignored from the beginning of the proof of  Lemma \ref{F11}, but, we chose to keep them at certain level throughout the proof for later use in the proof of  Theorem  \ref{th_u_t}.
\end{rem}

\begin{rem}\label{rem3.2}
Naturally,  the  results of Lemmas \ref{F1} and \ref{F11}  hold true when we consider a more general nonlinearity $\mathcal{N}(u,u_t)=|u_t|^p+\tilde{\mathcal{N}}(u,u_t)$ (with $\tilde{\mathcal{N}}(u,u_t) \ge 0$) instead of $|u_t|^p+|u|^q$,  as it is the case for example in (\ref{T-sys-bis}). 
\end{rem}
\section{Proof of Theorem \ref{blowup}}\label{proof}
\par\quad
The aim of this section is to prove the first  theorem in this article, namely Theorem \ref{blowup}, which is related to the blow-up result and the  lifespan estimate of the solution of (\ref{T-sys}). To this end, we will  employ the lemmas proven in Section \ref{aux} and a Kato's lemma type.

First, using the hypotheses in Theorem \ref{blowup}, we recall that $\mbox{\rm supp}(u)\ \subset\{(x,t)\in\R^N\times[0,\infty): |x|\le t+R\}$. \\
Let $t \in [0,T)$. Then, thanks to the hypotheses in Theorem \ref{blowup}, we define
\begin{equation}
F(t):=\int_{\R^N}u(x,t)dx.
\end{equation}
By choosing the test function $\Phi$ in \eqref{energysol2} such that
$\Phi\equiv 1$ in $\{(x,s)\in \R^N\times[0,t]:|x|\le s+R\}$\footnote{ The choice $\Phi\equiv 1$ is possible since the initial data $f$ and $g$ are supported on $B_{\R^N}(0,R)$.} and using the definition of $F(t)$, we obtain
\begin{equation}
\label{F'0ineq12}
F'(t)+ \int_0^t  \frac{\mu}{1+s}F'(s) ds+\int_0^t  \frac{\nu^2}{(1+s)^2}F(s)\,ds= F'(0)+ \int_0^t \int_{\R^N}\left\{|u_t(x,s)|^p+|u(x,s)|^q\right\}dx \,ds.
\end{equation}
Differentiating in time the equation \eqref{F'0ineq12}, we have
\begin{equation}
\label{F'0ineq1-bis}
F''(t)+  \frac{\mu}{1+t}F'(t) +\frac{\nu^2}{(1+t)^2}F(t)= \int_{\R^N}\left\{|u_t(x,t)|^p+|u(x,t)|^q\right\}dx.
\end{equation}
In order to get rid of the mass term in \eqref{F'0ineq1-bis} ({\it i.e.} $\frac{\nu^2}{(1+t)^2}F(t)$), we  introduce a  new functional $G(t)$ which is defined as
\begin{equation}
\label{G-exp}
G(t):=\zeta(t)F(t) \ \text{with} \ \d \zeta(t)=(1+t)^{\al},
\end{equation}
where $\al$ is given by \eqref{alpha}.\\
Using \eqref{G-exp}, the equation \eqref{F'0ineq1-bis} yields
\begin{equation}
\label{G-second}
G''(t)+  \frac{1 +\sqrt{\de}}{1+t}G'(t) = (1+t)^{\al}\int_{\R^N}\left\{|u_t(x,t)|^p+|u(x,t)|^q\right\}dx.
\end{equation}
Now, we introduce the following multiplier
\begin{equation}
\label{test1}
\begin{aligned}
\mathcal{M}(t):=(1+t)^{1 +\sqrt{\de}}. 
\end{aligned}
\end{equation}
Multiplying \eqref{G-second} by $\mathcal{M}(t)$ and integrating over $(0,t)$, we infer that
\begin{equation}
\label{F'0ineq11}
\mathcal{M}(t)G'(t)= G'(0)+ \int_0^t \mathcal{M}(s) (1+s)^{\al}\int_{\R^N}\left\{|u_t(x,s)|^p+|u(x,s)|^q\right\}dx \,ds.
\end{equation}
Observe that $G'(0)=\frac{\mu-1-\sqrt{\de}}{2}\int_{\R^N}f(x)  dx+\int_{\R^N} g(x) dx > 0$ thanks to the hypothesis \eqref{hypfg}. Hence, we have
\begin{equation}\label{F'0ineq11-22}
\mathcal{M}(t)G'(t) \ge  \int_0^t \mathcal{M}(s) (1+s)^{\al}\int_{\R^N}\left\{|u_t(x,s)|^p+|u(x,s)|^q\right\}dx \,ds.
\end{equation}
Integrating \eqref{F'0ineq11-22} over $(0,t)$, after dividing it by $\mathcal{M}(t)$, and using the fact that 
 $G(0)=\int_{\R^N}f(x)  dx \ge 0$, we infer that
\begin{align}
\label{F'0ineqmmint}
G(t)\geq \int_0^t\frac{1}{\mathcal{M}(s)} \int_0^s \mathcal{M}(\tau ) (1+\tau)^{\al}\int_{\R^N}\left\{|u_t(x,\tau)|^p+|u(x,\tau)|^q\right\} dx \,d\tau\,ds.
\end{align}
Utilizing the estimates \eqref{psi} and \eqref{F2postive} together with H\"{o}lder's inequality,  a lower bound fo the nonlinear term can be obtained as follows:
\begin{equation}
\begin{array}{rcl}
\d \int_{\R^N}|u_t(x,t)|^pdx &\geq& \d G_2^p(t)\left(\int_{|x|\leq t+R}\Big(\psi(x,t)\Big)^{\frac{p}{p-1}}dx\right)^{-(p-1)} \vspace{.2cm}\\  &\geq&  C\rho^{-p}(t)e^{-pt}\e^p(1+t)^{ -\frac{(N-1)(p-2)}2}, \quad \forall \ t \ge T_1,
\end{array}
\end{equation}
where $T_1$ is defined by  \eqref{F2postive}.\\
From \eqref{lmabdaK} and \eqref{est-double}, we deduce that
 \begin{equation}\label{pho-est}
 \d \rho(t)e^{t} \le C (1+t)^{\frac{\mu}{2}}, \ \forall \ t \ge T_0/2 \quad (T_0 < T_1).
 \end{equation}
Hence, we get
\begin{equation}
\d \int_{\R^N}|u_t(x,t)|^pdx \geq C \e^p(1+t)^{ -\frac{\mu p+(N-1)(p-2)}2}, \ \forall \ t \ge T_1.\\
\end{equation}
Combining the above inequality  with \eqref{F'0ineqmmint} yields
\begin{equation}
\begin{aligned}\label{F0first}
G(t)
&\geq C\e^p (1+t)^{2+\al -\frac{\mu p+(N-1)(p-2)}2}, \ \forall \ t \ge T_1.
\end{aligned}
\end{equation}
Again here thanks to the fact that $\mbox{\rm supp}(u)\ \subset\{(x,t)\in\R^N\times[0,\infty): |x|\le t+R\}$, we have
\begin{align}
\Big(\int_{\R^N}u(x,t)dx\Big)^q\le C  \big(t+1 \big)^{N(q-1)}\int_{|x|\le t+R}|u(x,t)|^qdx, 
\end{align}
and, hence, we deduce that
\begin{align}\label{f0qsup}
G^q(t)\le  C\big(t+1 \big)^{N(q-1)+\al q}  \int_{|x|\le t+R}|u(x,t)|^q dx.
\end{align}
Differentiating in time \eqref{F'0ineq11}, we obtain 
\begin{equation}
\label{F'0ineq1}
(\mathcal{M}(t)G'(t))'=  \mathcal{M}(t) (1+t)^{\al}\int_{\R^N}\left\{|u_t(x,t)|^p+|u(x,t)|^q\right\}dx \ge  \mathcal{M}(t) (1+t)^{\al}\int_{\R^N}|u(x,t)|^q dx.
\end{equation}
 Incorporating  \eqref{f0qsup} into \eqref{F'0ineq1} and dividing by $\mathcal{M}(t)$ the new equation resulting from \eqref{F'0ineq1}), we get for $L(t):=\sqrt{\mathcal{M}(t)}G(t)$,
\begin{equation}
\label{F'0ineq2}
L''(t) +\frac{1 -\de}{4(1+t)^2} L(t)\ge  C\frac{L^q(t)}{\big(1+t \big)^{(N+\frac{\mu}{2})(q-1)}}, \ \forall \ t > 0.
\end{equation}

At this level, we recall that $L(t) \ge 0$ thanks to the positivity of $G(t)$ which is obtained in \eqref{F'0ineqmmint}. Therefore, two cases will presented in the subsequent depending on the value of the parameter $\de$, defined in \eqref{delta}. \\

{\bf First case ($\de \ge 1$).} 

Since  $L(t)$ is nonnegative, the estimate \eqref{F'0ineq2} yields
\begin{equation}
\label{F'0ineq2-1}
L''(t) \ge  C\frac{L^q(t)}{\big(1+t \big)^{(N+\frac{\mu}{2})(q-1)}}, \ \forall \ t > 0.
\end{equation}
Recall the definition of $L(t):=\sqrt{\mathcal{M}(t)}G(t)$ and using  \eqref{F'0ineq11-22} and \eqref{F'0ineqmmint}, we deduce that $L'(t) \ge 0$. Hence, multiplying \eqref{F'0ineq2-1} by $L'(t)$ gives 
\begin{equation}
\label{F25nov3}
\left\{\Big(L'(t)\Big)^2\right\}'
\ge C\frac{\Big(L^{q+1}(t)\Big)'}{(1+t)^{(N+\frac{\mu}{2})(q-1)}}, \ \forall \ t > 0.
\end{equation}
A simple integration in time of \eqref{F25nov3} yields
\begin{equation}
\label{F25nov4-1}
\Big(L'(t)\Big)^2
\ge C\frac{L^{q+1}(t)}{(1+t)^{(N+\frac{\mu}{2})(q-1)}} +\left((L'(0))^2-CL^{q+1}(0)\right), \ \forall \ t > 0.
\end{equation}
For $\e$ small enough, thanks to the hypothesis on the smallness of the initial data, we obviously have the positivity of the last term in the right-hand side of \eqref{F25nov4-1}.\\
Therefore, the estimate \eqref{F25nov4-1} implies that
\begin{equation}
\label{F25nov6}
\frac{L'(t)}{L^{1+\theta}(t)}
\ge C \frac{L^{\frac{q-1}2-\theta}(t)}{(1+t)^{\frac{(2N+\mu)(q-1)}{4}}}, \ \forall \ t > 0,
\end{equation}
for $\theta>0$ small enough.\\

{\bf Second case ($\de < 1$).}

First, we recall that $L'(t)>0$. Then, multiplying \eqref{F'0ineq2} by $(1+t)^2 L'(t)$ yields
\begin{align}
\label{F25nov3}
\frac{(1+t)^2}{2}\left(\left(L'(t)\right)^2\right)' + \frac{1 -\de}{8} \left(L^2(t)\right)'\\
\ge C\frac{\Big(L^{q+1}(t)\Big)'}{\big(1+t \big)^{(N+\frac{\mu}{2})(q-1)-2}}, \ \forall \ t > 0.\nonumber
\end{align}
We integrate the above inequality and observe that $t \mapsto 1/\big(1+t \big)^{(N+\frac{\mu}{2})(q-1)-2}$ is a decreasing function (thanks to $\d N(q-1)-2>0$ since   $q>1+\frac{2}{N}$ which is related to the case $q>q_S(N+\mu)$\footnote{  Obviously if $q \le q_S(N+\mu)$ the blow-up result can be proven by only considering the nonlinearity $|u(x,s)|^q$.}). Hence, we obtain
\begin{equation}
\label{F25nov4}
\begin{array}{c}
\d \frac{(1+t)^2}{2}\left(L'(t)\right)^2 + \frac{1 -\de}{8} L^2(t)
\ge C_1\d \frac{L^{q+1}(t)}{\big(1+t \big)^{(N+\frac{\mu}{2})(q-1)-2}}\vspace{.2cm} \\+\d L^2(0)\left(\frac{1 -\de}{8}- C L^{q-1}(0)\right), \ \forall \ t > 0.
\end{array}
\end{equation}
Again here, we simply show that the last term in the right-hand side of \eqref{F25nov4} is positive using the smallness of the initial data ($\e$ small enough). Therefore we infer that
\begin{equation}
\label{F25nov5}
\d \frac{(1+t)^2}{2}\left(L'(t)\right)^2 + \frac{1 -\de}{8} L^2(t)
\ge C_1\d \frac{L^{q+1}(t)}{\big(1+t \big)^{(N+\frac{\mu}{2})(q-1)-2}}.
\end{equation}
Utilizing  the estimate  \eqref{F0first}, the expression of $L(t)$, the definition of $\lambda(p, q, N)$, as in \eqref{1.5}, and the expression of $\mathcal{M}(t)$ (given by \eqref{test1}), we conclude that
\begin{equation}
\label{Gq-11}
\frac{L^{q-1}(t)}{\big(1+t \big)^{(N+\frac{\mu}{2})(q-1)-2}}>C_2  \e^{p(q-1)}(1+t)^{2-\frac{\lambda(p, q, N+\mu)}{2}}, \ \forall \ t \ge T_1(\ep).
\end{equation}
Now, we choose $T_2$ such that
\begin{equation}\label{T2-choice}
T_2= \max \left(C_3^{-\frac{2}{4-\lambda(p, q, N+\mu)}} \e^{-\frac{2p(q-1)}{4-\lambda(p, q, N+\mu)}},T_1(\ep)\right),
\end{equation}
where $\d C_3=4 C_1C_2/ (1 -\de)$ and $T_1(\ep)$ is defined by  \eqref{F2postive}. Note that for $\e$ small enough 
\begin{equation}\label{T2-choice-bis}
T_2=T_2(\e):=C_3^{-\frac{2}{4-\lambda(p, q, N+\mu)}} \e^{-\frac{2p(q-1)}{4-\lambda(p, q, N+\mu)}}.\end{equation}
Hence, the above choice of $T_2$ implies that
\begin{equation}
\label{F25nov6}
\frac{L^{q-1}(t)}{\big(1+t \big)^{(N+\frac{\mu}{2})(q-1)-2}}>\frac{1 -\de}{4C_1}, \ \forall \ t\ge T_2,
\end{equation} 
Now, combining \eqref{F25nov6} in \eqref{F25nov5}, we obtain the following estimate:
\begin{equation}
\label{F25nov7}
\d (1+t)^2\left(L'(t)\right)^2 
\ge C_1\d \frac{L^{q+1}(t)}{\big(1+t \big)^{(N+\frac{\mu}{2})(q-1)-2}}, \ \forall \ t\ge T_2,
\end{equation}
that we rewrite as
\begin{equation}
\label{F25nov8}
\frac{L'(t)}{L^{1+\theta}(t)}
\ge C  \frac{L^{\frac{q-1}2-\theta}(t)}{(1+t)^{\frac{(2N+\mu)(q-1)}4}}, \ \forall \ t\ge T_2,
\end{equation}
for $\theta>0$ small enough. \\

Finally, for  $\de \ge 1$ or $\de < 1$, we obtain almost the same estimates \eqref{F25nov6} and \eqref{F25nov8}, respectively, however, they only differ by the starting times which are $0$ and $T_2$, respectively. In conclusion, the  estimate \eqref{F25nov8} is true in both cases for all $t\ge T_2$ where $T_2$ is given by \eqref{T2-choice-bis}.

The rest of the proof follows the same lines as in the corresponding part in the proof  of Theorem 2.2 in \cite[Section 4]{Our2}  which starts from (4.30) in the same paper \cite{Our2}.

This achieves the proof of Theorem \ref{blowup}.\hfill $\Box$

\section{Proof of Theorem \ref{th_u_t}.}\label{sec-ut}

We are interested in this  section in proving Theorem \ref{th_u_t} which is  related to the derivation  of the critical exponent associated with the nonlinear term in the problem \eqref{T-sys-bis}. As mentioned earlier in this work, we will make use of the computations already done in Section \ref{aux}. More precisely, we recall that Lemma \ref{F1}  remains true for the solution of \eqref{T-sys-bis} (see Remark \ref{rem3.1}) since we only use the fact that the nonlinear terms are positive. Furthermore, Lemma \ref{F11}, which is based on the result of Lemma \ref{F1-2}, only uses the nonlinear time derivative term $|u_t|^p$ and therefore remains true for the solution of \eqref{T-sys-bis}.

In fact, we proved in Lemma \ref{F11}  that $G_2(t)$ is  coercive  starting from relatively large time which is increasing as the initial data are getting smaller, namely as $\ep \to 0$. This  observation constitutes a novelty for \eqref{T-sys-bis} compared to the equation without mass; see e.g. \cite{Our2}.

Taking advantage from the above observation about $G_2(t)$, we improve the blow-up result in \cite{Palmieri} for   $p \in (1, p_G(N+\sigma)]$, where $p_G(N)$ is the Glassey exponent given by \eqref{Glassey} and $\sigma$ is given by \eqref{sigma}, to reach the new blow-up region $p \in (1, p_G(N+\mu))$. Indeed, our result for (\ref{T-sys-bis}) enhances the corresponding one in  \cite{Palmieri}, for  $\de < 1$, and coincides with it for $\de \ge 1$. In particular, we may conjecture that the mass term {\it has no influence} on the dynamics for $\de \ge 0$, {\it i.e.}, $\nu^2 \le \frac{(\mu -1)^2}{4}$, by simply comparing \cite[Theorem 2.4]{Our2} and Theorem \ref{th_u_t} in the present work. Finally, we believe that the derived limiting exponent $p_G(N+\mu)$ may get to the threshold between the blow-up and the global existence regions.\\

In the subsequent we will use the  estimate \eqref{G2+bis4} with omitting the nonlinear term $|u(x,t)|^q$ and keeping the other  nonlinearity $|u_t(x,t)|^p$. Hence, we obtain
\begin{equation}\label{G2+bis5}
\begin{array}{rcl}
\d G_2'(t)+\frac{3\Gamma(t)}{4}G_2(t)&\ge& \d \frac{1}{4}\int_{0}^t \int_{\R^N}|u_t(x,s)|^p\psi(x,s)dx ds\vspace{.2cm}\\ &+&  \d \int_{\R^N}|u_t(x,t)|^p\psi(x,t) dx+C_5 \, \e, \quad \forall \ t  \ge \tilde{T}_2.
\end{array}
\end{equation}
Let
\[
H(t):=
\frac{1}{8}\int_{T_3(\ep)}^t \int_{\R^N}|u_t(x,s)|^p\psi(x,s)dx ds
+\frac{C_6 \e}{8},
\]
where $T_3(\ep):=\max(T_1, \tilde{T}_2,\tilde{T}_3)$, $C_6=\min(C_5,8C_{G_2})$ ($C_{G_2}$ is defined in Lemma \ref{F11}) and $\tilde{T}_3$ is chosen such that $\frac{1}{4}-\frac{3\Gamma(t)}{32}>0$ and $\Gamma(t)>0$ for all $t \ge\tilde{T}_3$ (this is possible thanks to \eqref{gamma} and \eqref{lambda'lambda1}). Since $T_1$, given by \eqref{F2postive}, is large for $\ep$ small, we can hereafter  set $T_3(\ep)=-\ln(\ep)$. Now, we introduce
$$\mathcal{F}(t):=G_2(t)-H(t),$$
which satisfies
\begin{equation}\label{G2+bis6}
\begin{array}{rcl}
\d \mathcal{F}'(t)+\frac{3\Gamma(t)}{4}\mathcal{F}(t) &\ge& \d \left(\frac{1}{4}-\frac{3\Gamma(t)}{32}\right)\int_{T_3(\ep)}^t \int_{\R^N}|u_t(x,s)|^p\psi(x,s)dx ds\vspace{.2cm}\\ &+&  \d \frac{7}{8}\int_{\R^N}|u_t(x,t)|^p\psi(x,t) dx+C_6 \left(1-\frac{3\Gamma(t)}{32}\right) \e\\
&\ge&0, \quad \forall \ t \ge T_3(\ep).
\end{array}
\end{equation}
Then, the estimate   \eqref{G2+bis6} yields
\begin{align}\label{est-G111}
 \mathcal{F}(t)
\ge \mathcal{F}(T_3(\ep))\frac{(1+T_3(\ep))^{3\mu/4}}{\rho^{3/2}(T_3(\ep))}\frac{\rho^{3/2}(t)}{(1+t)^{3\mu/4}}, \ \forall \ t \ge T_3(\ep),
\end{align}
where $\rho(t)$ is defined by \eqref{lmabdaK}.\\
Hence, we have $\d \mathcal{F}(T_3(\ep))=G_2(T_3(\ep))-\frac{C_6 \e}{8} \ge G_2(T_3(\ep))-C_{G_2}\e \ge 0$ thanks to Lemma \ref{F11} and the fact that $C_6=\min(C_5,8C_{G_2}) \le 8C_{G_2}$. \\
Consequently, we have 
\begin{equation}
\label{G2-est}
G_2(t)\geq H(t), \ \forall \ t \ge T_3(\ep).
\end{equation}
Using the H\"{o}lder's inequality and the estimates \eqref{psi} and \eqref{F2postive}, we can easily see that
\begin{equation}
\begin{array}{rcl}
\d \int_{\R^N}|u_t(x,t)|^p\psi(x,t)dx &\geq&\d G_2^p(t)\left(\int_{|x|\leq t+R}\psi(x,t)dx\right)^{-(p-1)} \vspace{.2cm}\\ &\geq& C G_2^p(t) \rho^{-(p-1)}(t)e^{-(p-1)t}(1+t)^{-\frac{(N-1)(p-1)}2}.
\end{array}
\end{equation}
Thanks to \eqref{pho-est}, we get 
\begin{equation}
\d \int_{\R^N}|u_t(x,t)|^p\psi(x,t)dx \geq C G_2^p(t)(1+t)^{-\frac{(N+\mu-1)(p-1)}2}, \ \forall \ t \ge T_3(\ep).
\end{equation}
From the above estimate and \eqref{G2-est}, we infer that
\begin{equation}
\label{inequalityfornonlinearin}
H'(t)\geq C H^p(t)(1+t)^{-\frac{(N+\mu-1)(p-1)}2}, \quad \forall \ t \ge T_3(\ep).
\end{equation}
Observing that $H(T_3(\ep))=C_6 \e/8>0$, 
we deduce the upper bound of the lifespan estimate as stated in Theorem \ref{th_u_t}.

\section{Appendix}\label{appendix}
In this Appendix we will display some figures obtained by simple computations on Matlab. Indeed, the aim here is to enhance the observations obtained in Lemmas \ref{F1-2} and \ref{F11}, and more precisely to show the behavior of the functional $F_2(t)$, defined by \eqref{F2def1}, for different values of $\de=(\mu-1)^2-4\nu^2$ (and consequently this yields the dynamics of $G_2(t)$). We recall here that 
\begin{displaymath}\label{def231}\d F_2(t)=F_1'(t) +F_1(t),\end{displaymath}
where $F_1(t)$ satisfies the  equation \eqref{F1+bis1} with ignoring the nonlinear terms and using the above equation:
\begin{equation}\label{eq-F2}
\displaystyle F''_1(t)+\left(2+\frac{\mu}{1+t}\right) F'_1(t) +\left(\frac{\mu}{1+t}+\frac{\nu^2}{(1+t)^2}\right) F_1(t) = 0.
\end{equation}
The numerical treatment of \eqref{eq-F2} yields the graphs for $F_2(t)$  as shown below.

\begin{figure}[ht] 
  \label{fig1} 
  \begin{minipage}[b]{0.5\linewidth}
    \centering
    \includegraphics[width=.5\linewidth]{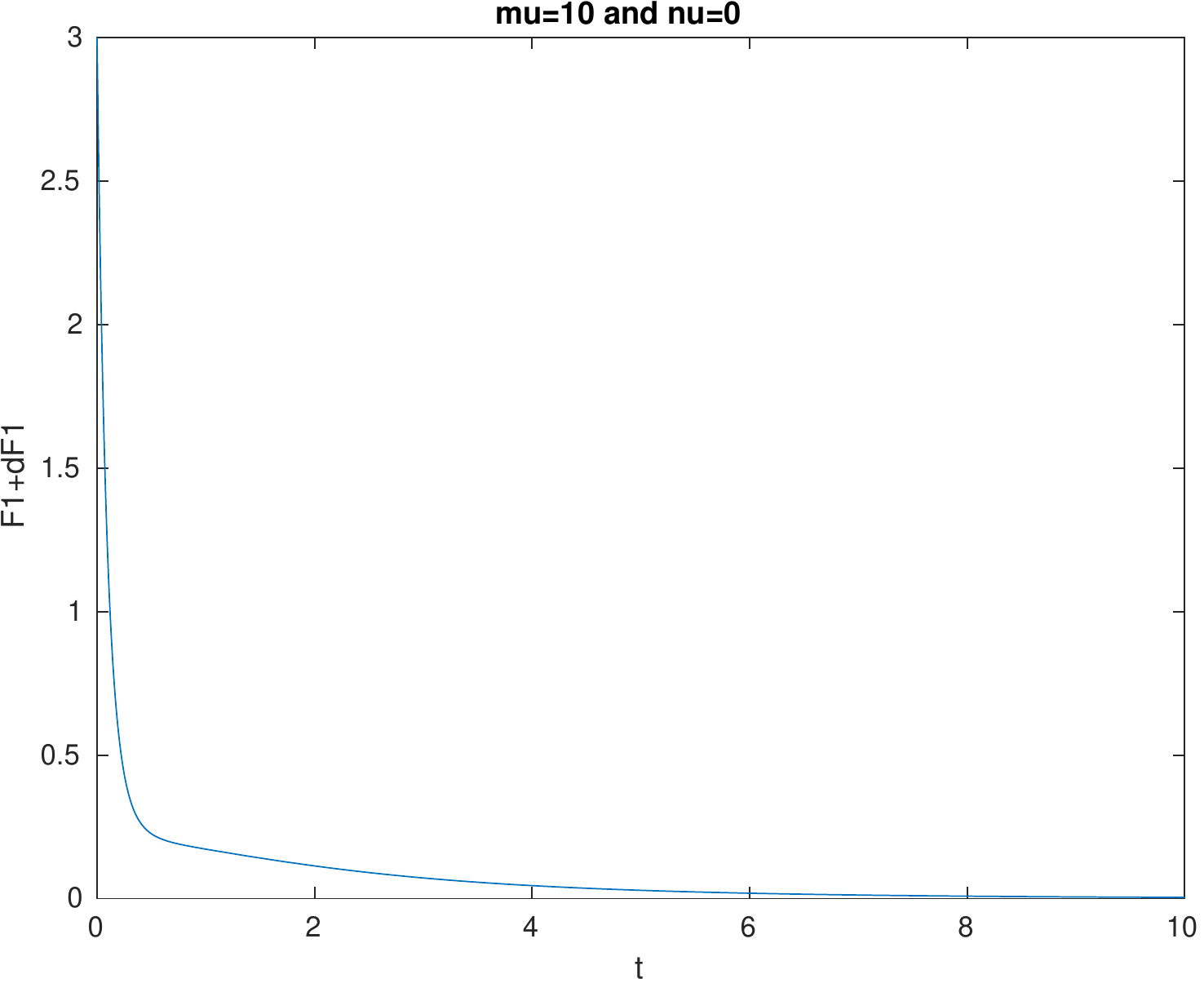} 
    \caption{\tiny{The case $\mu=10, \nu=0$ (the free-mass case with $\de >0$).}} 
    \vspace{4ex}
  \end{minipage}
  \begin{minipage}[b]{0.5\linewidth}
    \centering
    \includegraphics[width=.5\linewidth]{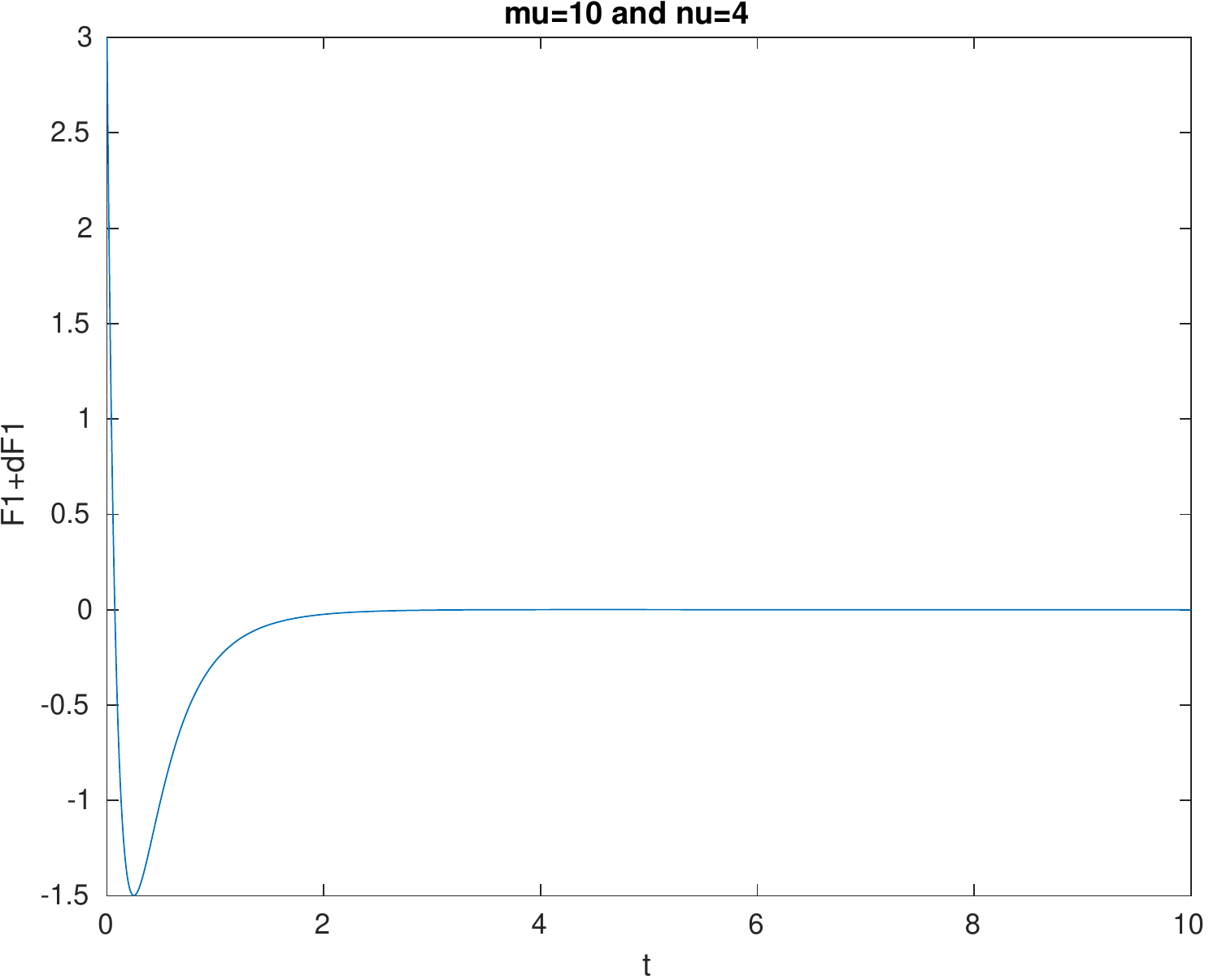} 
    \caption{\tiny{The case $\mu=10, \nu=4$ which corresponds to $\de >0$.}} 
    \vspace{4ex}
  \end{minipage} 
  \begin{minipage}[b]{0.5\linewidth}
    \centering
    \includegraphics[width=.5\linewidth]{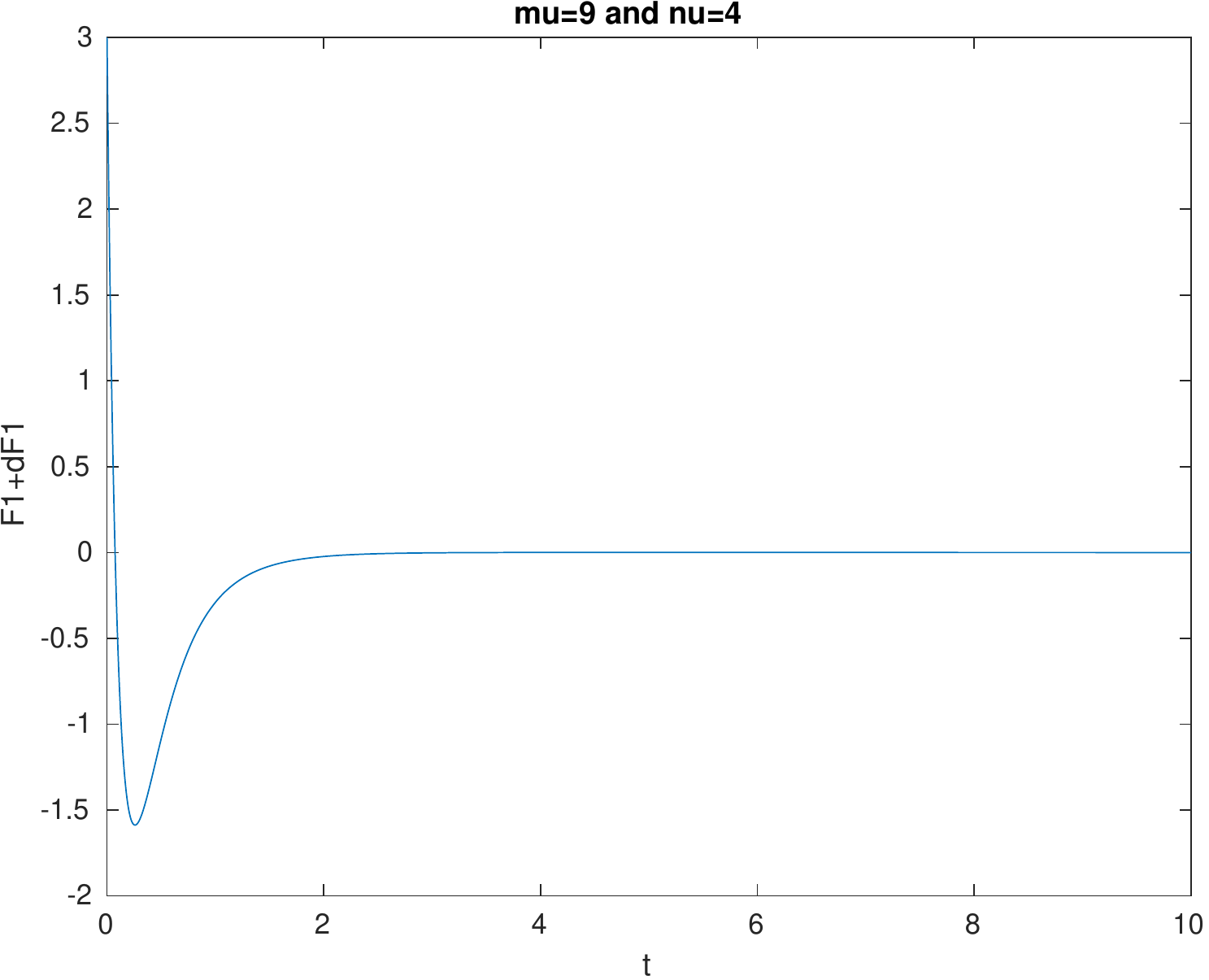} 
    \caption{\tiny{The case $\mu=9, \nu=4$ which corresponds to $\de =0$.}} 
    \vspace{4ex}
  \end{minipage}
  \begin{minipage}[b]{0.5\linewidth}
    \centering
    \includegraphics[width=.5\linewidth]{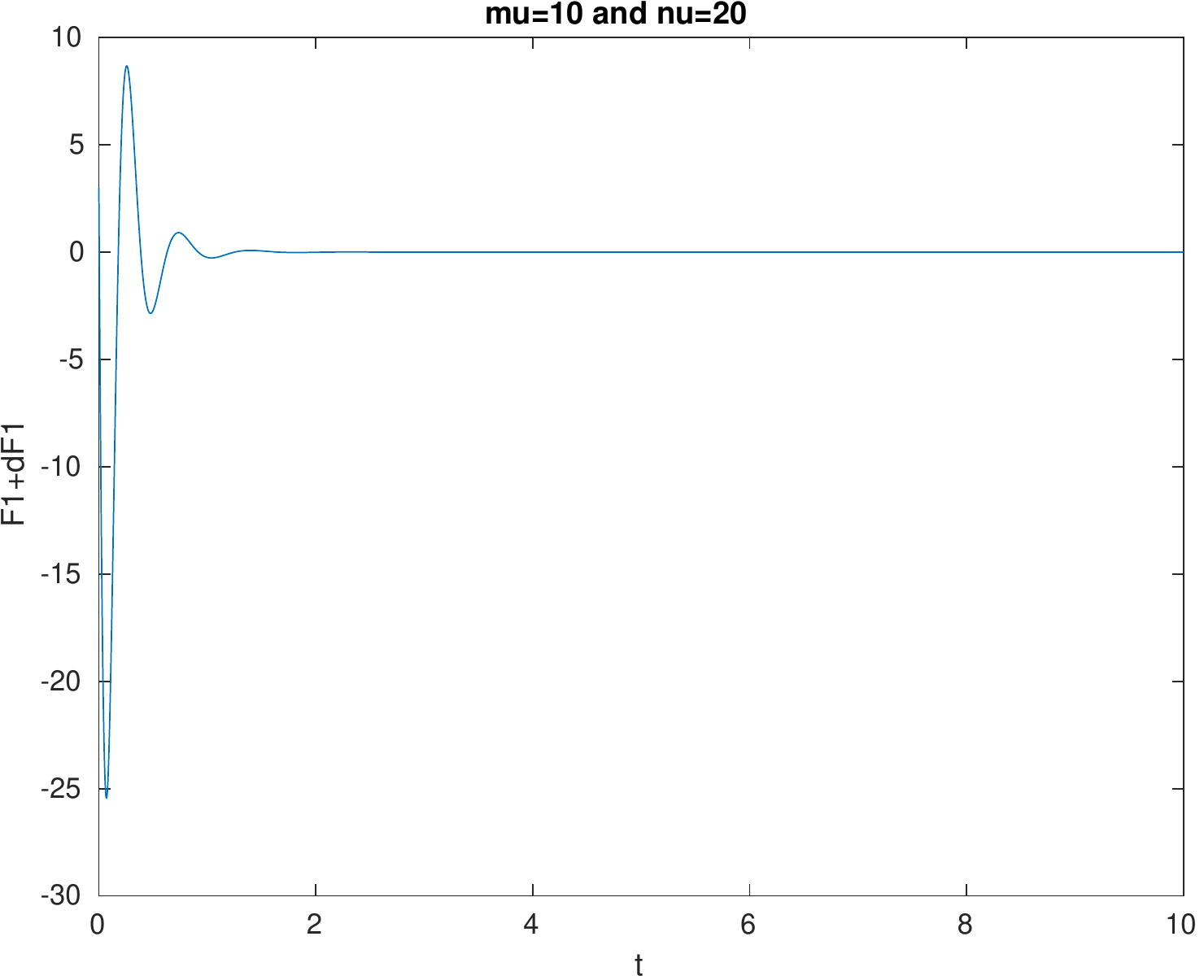} 
    \caption{\tiny{The case $\mu=10, \nu=20$ which corresponds to $\de <0$.}} 
    \vspace{4ex}
  \end{minipage} 
\end{figure}

We end this appendix by stating some observations on the above figures which we believe have the merit to be mentioned:
\begin{itemize}
\item We note that the free-mass case ($\nu=0$) exhibits the positivity of $F_2(t)$, and hence that of $G_2(t)$, for all time starting from  the initial time $t=0$. This is in agreement with our results in \cite{Our2} on the positivity of $F_2(t)$ and $G_2(t)$.
\item From Figures 2, 3 and 4, which correspond to the cases $\de >0, \de =0$ and $\de <0$, respectively, we notice a negative lower bound of $F_2(t)$, but, for large time the functional  $F_2(t)$ is positive. However, more oscillations near $t=0$ are observed when $\de <0$. Of course the case $\de <0$ is not studied in this work but will be the subject of a future investigation.
\end{itemize}


\bibliographystyle{plain}

\end{document}